\numberwithin{equation}{section}
\theoremstyle{plain}
\newtheorem{theorem}{\bf Theorem}[section]
\newtheorem{corollary}[theorem]{\bf Corollary}
\newtheorem{lemma}[theorem]{\bf Lemma}
\newtheorem{question}{\bf Question}
\theoremstyle{definition}
\newtheorem*{example*}{\bf Example}
\newtheorem{definition}[theorem]{Definition}
\theoremstyle{remark}
\newcommand{\mean}{\mathbb{E}}
\newcommand{\pr}{\mathbb{P}}
\newcommand{\ind}[1]{\mathbbm{1}_{\langle #1\rangle}}
\newcommand{\whp}{{\bf whp}}
\newcommand{\Whp}{{\bf Whp}}
\newcommand{\C}{\mathcal{C}}
\newcommand{\D}{\mathcal{D}}
\newcommand{\be}{\begin{equation}}
\newcommand{\ee}{\end{equation}}
\newcommand{\eps}{\varepsilon}
\newcommand{\Len}[1]{\ensuremath{\mathcal{L}_{\ge #1}}}
\newcommand{\lp}{\left(}
\newcommand{\rp}{\right)}
\newcommand{\lb}{\left[}
\newcommand{\rb}{\right]}
\newcommand{\lf}{\lfloor}
\newcommand{\rf}{\rfloor}
\newcommand{\ch}[1]{\langle #1 \rangle}
\newcommand{\one}{c_1}
\newcommand{\Poisson}{\operatorname{Pois}}
\newcommand{\Bin}{\operatorname{Bin}}
\title{On a uniformly random chord diagram and its intersection graph}  
\author{H\"{u}seyin Acan\\ School of Mathematical Sciences\\ Monash University\\ Melbourne, VIC 3800\\ Australia\\ 
\texttt{huseyin.acan@monash.edu}
}
\date{}
\begin{document}
\maketitle

\begin{abstract}
A chord diagram refers to a set of chords with distinct endpoints on a circle. The intersection graph of a chord diagram $\cal C$ is defined by substituting the chords of $\cal C$ with vertices and by adding edges between two vertices whenever the corresponding two chords cross each other. Let $C_n$ and $G_n$ denote the chord diagram chosen uniformly at random from all chord diagrams with $n$ chords and the corresponding intersection graph, respectively.
We analyze $C_n$ and $G_n$ as $n$ tends to infinity. In particular,  we study the degree of a random vertex in $G_n$, the $k$-core of $G_n$, and the number of strong components of the directed graph obtained  from $G_n$ by orienting edges by flipping a fair coin for each edge. We also give two equivalent evolutions of a random chord diagram and show that, with probability approaching $1$,  a chord diagram produced after $m$ steps of these evolutions becomes monolithic as $m$ tends to infinity and stays monolithic afterward forever.

\medskip
\noindent{\bf Keywords:} chord diagram, intersection graph, $k$-core, degree, monolithic, asymptotic, evolution.

\noindent{\bf 2010 AMS Subject Classification:} Primary: 60C05; Secondary 60F05.
\end{abstract}



\section{Introduction}\label{sec:intro}

A \textit{chord diagram} of size $n$ is a pairing of $2n$ given points on a circle. We label the points $1$ through $2n$ clockwise and join each pair of points in the pairing by a chord to obtain a geometric and a combinatorial object. It is easy to see that there are $(2n)!/(2^nn!)$ chord diagrams with $n$ chords.

Although we are mainly concerned about the combinatorics of random chord diagrams in this paper, chord diagrams appear extensively in some other fields such as in the study of some invariants in knot theory~\cite{APRW13,Bar-Nat95,BR00,Kon93,Sto98}, in the representation theory of Lie algebras~\cite{Cam-Stur05}, and in codifying the pairings among nucleotides in RNA molecules~\cite{Bon08RNA,OZ02RNA, Rei10book}. For detailed information about chord diagrams and their topological and algebraic significance we refer the reader to Chmutov, Duzhin, and Mostovoy's book~\cite{CDMbook}. 

Many probabilistic and enumerational problems about chord diagrams have been studied. 
A remarkable formula for the generating function counting chord diagrams with a given genus was given by Harer and Zagier~\cite{HZ86}. Later, Linial and Nowik~\cite{LN11} and Chmutov and Pittel~\cite{CP13} studied the genus of a uniformly chosen random chord diagram. In random graph theory, Bollob\'{a}s et al.~\cite{BRST01} 
used linearized chord diagrams to generate a preferential attachment random graph introduced by Barab\'{a}si and Albert~\cite{BA99}. 

The enumeration of chord diagrams was first studied by Touchard in a sequence of papers. In~\cite{Tou52}, he found a functional equation for the bivariate generating function $\sum_{n,m}T_{n,m}x^my^n$, where $T_{n,m}$ denotes the number of chord diagrams with $n$ chords and $m$ crossings. Using this, J. Riordan~\cite{JRiordan75} found an exact formula for $T_{n,m}$ in the form of an alternating sum.
Note that $T_{n,0}$ counts the famous Catalan numbers.
Using some recurrence relations, Stein and Everett~\cite{SE78} showed that a random chord diagram of size $n$ is connected with probability approaching 1 as  $n\to \infty$.
Cori and Marcus~\cite{CM98} counted non-isomorphic chord diagrams. Acan and Pittel~\cite{AP14+} studied a phase transition for the appearance of a giant component and found asymptotic estimates for $T_{n,m}$ for $m =O(n\log n)$. Various enumerational problems were also studied in~\cite{AcanPhD}.

Flajolet and Noy~\cite{FN00} studied a uniformly random chord diagram with $n$ chords and, using generating functions, they showed that (i) the number of components (defined formally below) approaches $1+\Poisson(1)$ as $n\to \infty$, where $\Poisson(\lambda)$ denotes the Poisson distribution with parameter $\lambda$, and (ii) the number of crossings is asymptotically Gaussian. They also showed that almost all chord diagrams are monolithic, where a monolithic diagram consists of a root component and a number of isolated chords.

In this paper, we study several characteristics of a random chord diagram. In particular we extend the results of Flajolet and Noy about the components of a random chord diagram in several directions. Before we proceed, we note that the results in this paper are presented in graph theory language. The following definition allows us to do so.

\begin{definition}[Intersection graphs]\label{def:graph}
For each chord diagram $\C$, we define the corresponding \textit{intersection graph} $G_{\C}$ as follows. Each chord in $\C$ becomes a vertex in $G_{\C}$, and two vertices in $G_{\C}$ are adjacent if and only if the corresponding chords cross each other in $\C$; see Figure~\ref{fig: CD and graph}. 
\end{definition}

The intersection graphs of chord diagrams are related to circle graphs, where a circle graph is the intersection graph of a set of labeled chords. (The endpoints of chords are not labeled in this case.) Unlabeled versions of these two classes of graphs are the same. Some of the NP-complete problems in general graphs, such as finding  the clique number or independence number, have polynomial time algorithms for circle graphs (and hence for the intersection graphs of chord diagrams).

By Definition~\ref{def:graph}, any graph theoretic term about a chord diagram can be understood in reference to the corresponding graph. 
For example, components of a chord diagram $\C$ corresponds to the components of $G_{\C}$ and the $k$-core of $G_{\C}$ corresponds to the $k$-core of $G_{\C}$.
Throughout the paper, we denote by $C_n$ a chord  diagram chosen uniformly at random from all chord diagrams of size $n$ and (with an abuse of notation) by $G_n$ the corresponding intersection graph.

In section~\ref{se: degrees} we find the asymptotic distribution of the degree of a random chord in $C_n$, that is, the distribution of the number of chords crossing a random chord.
In Section~\ref{section: MCD} we define monolithic chord diagrams and we give an alternative proof for the fact that almost all graphs are monolithic, a fact first proven by Flajolet and Noy. In Section~\ref{sec: k-core} we study the $k$-core of $C_n$ (equivalently the $k$-core of $G_n$) for $k=o\big(\sqrt n\big)$. In Section~\ref{sec: directed chord diagrams} we study oriented chord diagrams or equivalently directed intersection graphs. In particular, we show that the number of strong components in the random directed intersection graph converges in distribution to $1+\Poisson(3)$, a result analogous to the result of Flajolet and Noy for the undirected case.
In Section~\ref{sec: growth of C_n} we present two models of dynamically growing random chord diagrams. We show that these two models are the same in some sense; after $n$ chords are drawn, both of them give $C_n$ after some canonical relabeling of the endpoints. We show that, during these evolutions, as $n\to \infty$ however slowly, a random chord diagram becomes monolithic and then stays monolithic afterward with probability approaching 1.

We conclude this work with a discussion on the independence number of $C_n$. 
Note that a chord diagram can be viewed as a fixed-point-free involution of $[2n]$. 
Using this interpretation and a result of Baik and Rains~\cite[Theorem 3.1]{Ba-Ra01} about the longest increasing subsequence of a random involution, Chen et al.~\cite[Remark 5.6]{Chen07} determined the asymptotic distribution of the clique number of $C_n$. 
Their terminology differs from ours though.
 They define $r$-crossings and $r$-nestings as follows: a set of  $r$ chords $\ch{x_1,y_1},\dots,\ch{x_r,y_r}$ is an $r$-crossing if $x_1<\cdots<x_r<y_1<\cdots<y_r$ and it is an $r$-nesting if $x_1<\cdots<x_r<y_r<\cdots<y_1$. According to this definition, Chen et al.\ showed that crossing numbers and nesting numbers are distributed symmetrically. Recently, Baik and Jenkins~\cite{Ba-Jen13} proved that $cr_n$ and $ne_n$ are  asymptotically independent, where $cr_n$ denotes the maximum crossing in $C_n$ and $ne_n$ denotes the maximum nesting in $C_n$.

\begin{figure}[t]
\centering
\vspace {-3cm}
\begin{tabular} {c c}
\begin{tikzpicture}[scale=1] 

  \coordinate (center) at (0,0);
  \def\radius{1.5cm}
  \draw (center) circle[radius=\radius];
\begin{scriptsize}
\foreach \x in {1,...,10}
{
\path (center) ++(180+0.1*180-\x*0.2*180:\radius) coordinate (A\x);
\fill (A\x) circle[radius=2pt] ++(180+0.1*180-\x*0.2*180:.7em) node {\x};
}
\foreach \from/\to in {A1/A4, A2/A7, A3/A6, A5/A9, A8/A10}
  \draw (\from) -- (\to);
\end{scriptsize}
\end{tikzpicture}

& \qquad 

\begin{tikzpicture}
 [scale=.7,auto=left,every node/.style={ inner sep=2pt}]
\coordinate (n1) at (0,0) {(3,5)};
\coordinate (n2) at (1,2) {(1,4)};
\coordinate (n3)  at (2,0) {(2,7)};
\coordinate (n4)   at (3,2) {(6,8)};
\coordinate (n5) at (4,0) {(2,9)};

\fill (n1) circle[radius=3pt] node[below] {$\ch{1,4}$};
\fill (n2) circle[radius=3pt] node[above] {$\ch{2,7}$};
\fill (n3) circle[radius=3pt] node[below] {$\ch{3,6}$};
\fill (n4) circle[radius=3pt] node[above] {$\ch{5,9}$};
\fill (n5) circle[radius=3pt] node[below] {$\ch{8,10}$};

\foreach \from/\to in {n1/n2, n1/n3, n3/n4, n4/n5, n2/n4}
  \draw (\from) -- (\to);
\end{tikzpicture}
\end{tabular}
\caption{A chord diagram and the corresponding intersection graph}
\label{fig: CD and graph}
\end{figure}
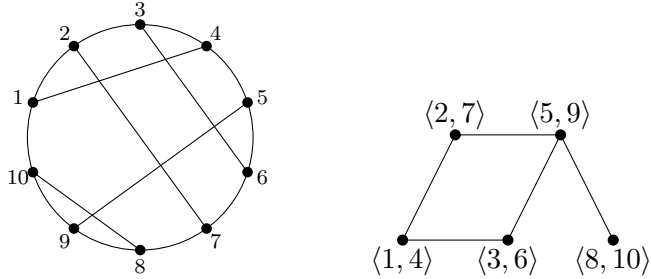

\subsection*{Notation and Terminology}

Here we give the notation and terminology used frequently in the paper. Additional notation and definitions will be given as they become necessary in the later section.
For two integers $a$ and $b$, we denote by $[a,b]$ the set $\{a,a+1,\dots,b\}$. We simply write $[b]$ for $[1,b]$. For a positive integer $k$, we denote by $(2k-1)!!$ the product of odd integers from 1 to $2k-1$.

A chord diagram is a pairing of a set of points, which are referred to as {\em endpoints} in this work. Our endpoints are labeled with positive integers. Unless otherwise stated, a chord diagram of size $n$ has the set of endpoints~$[2n]$. A \emph{block} refers to a set of consecutive endpoints on the circle. The {\em length of a block} is the number of endpoints it contains. A \textit{subdiagram} of a chord diagram $\C$ is a subset $S$ of the chords of $\C$, where the labeling of endpoints is inherited from $\C$. For example, $\{\ch{1,4},\ch{3,6}\}$ is a subdiagram of the chord diagram given in Figure \ref{fig: CD and graph}.

A chord joining the endpoints $x$ and $y$ is denoted by $\langle x,y\rangle$ or $\ch{y,x}$. 
A chord $\ch{x,y}$ divides the circle into two arcs and hence defines two  blocks of endpoints, $B_1$ and $B_2$, where $B_1$ and $B_2$ are disjoint and their union gives the whole set of endpoints except $x$ and $y$. In Figure \ref{fig: CD and graph}, the two blocks determined by the chord $\ch{3,6}$ are $\{4,5\}$ and $\{7,8,9,10,1,2\}$. 
If the set of endpoints is $[2n]$ and $x<y$, then the lengths of the blocks determined by $\ch{x,y}$ is $y-x-1$ and $2n-y+x-1$. 

For a chord $c=\ch{x,y}$ in a given chord diagram $\C$, the length of $c$, denoted $\ell(c)$, is the minimum of the lengths of the two blocks determined by $c$. For example, in Figure~\ref{fig: CD and graph}, the length of $\ch{5,9}$ is $3$.
According to this definition, a chord joining two consecutive endpoints has length $0$. The length of a chord can be at most $n-1$, in which case the chord is $\ch{x,x+n}$ for some $x \in [n]$.


\section{Degree of a random chord in $C_n$}\label{se: degrees}

Let $\C$ be a chord diagram and $c=\ch{x,y}$ be a chord in $\C$. The \textit{degree} of $c$ in $\C$, denoted $deg_{\C}(c)$, is the number of chords crossing $c$ in $\C$.
In other words, the degree of $c$ in $\C$ is the degree of the corresponding vertex in the graph $G_{\C}$.

Studying the degree of a random chord in $C_n$ is equivalent to studying the degree of the chord containing the endpoint 1. In this section this chord is denoted by $\one$.
The length of $\one$ in $C_n$ is distributed as
\[
\pr(\ell(\one)=k) =\begin{cases}
\frac{2}{2n-1} &  \text{ if } 0\le k\le n-2	,		\\
\frac{1}{2n-1} & \text{ if } k=n-1 .
\end{cases}
\]
The degree of $\one$ is closely related to $\ell(\one)$ as we shall see. One obvious observation is that $deg(\one)\le \ell(\one)$. Conditioned on the event $\{\ell(\one)=k\}$, we define $X_k$ as the number of chords with both endpoints lying in the smaller block determined by $\one$. If $\one=\ch{1,n+1}$, then both blocks have lengths $n-1$, in which case it does not matter which block we choose. Note that  $0\le X_k \le \lf k/2\rf$.

\begin{example*}
For $\C = \{\ch{1,8},\ch{2,4},\ch{3,11}, \ch{5,7}, \ch{6,9}, \ch{10,12}\}$, we have $\ell(\one)=4$ and $X_4=1$.
\end{example*}

Now, we compute the mean $\mu_k:=\mean[X_k]$ and the variance $\sigma_k^2:=\mean[X_k^2]-~\mu_k^2$. To find these, we can assume without loss of generality that $\one =\langle 1,k+2\rangle$. 
For the mean, we write
\[	 \mu_k = \sum_{(x,y)} \mean[\ind{x,y}],		\]
where the sum ranges over all $x$ and $y$ such that $2\le x<y\le k$, and $\ind{x,y}$ is the indicator random variable for the existence of the chord $\ch{x,y}$ conditioned on $\one$. Thus, 
\[	 \mean[\ind{x,y}]=\pr\big(\ch{x,y} \text{ is a chord in } C_n \mid \one= \langle 1, k+2\rangle\big)=\frac{(2n-5)!!}{(2n-3)!!}=\frac{1}{2n-3},		\]
and consequently
\be \label{eq: meanX_k}
\mu_k = {k \choose 2}\frac{1}{2n-3}= \frac{k(k-1)}{2(2n-3)} \sim \frac{k^2}{4n}
\ee
as $k$ and $n$ tend to infinity. Thus, as $n$ tends to infinity and for $k=o(\sqrt{n})$, with high probability\footnote{We say that an event $E$ that is defined for all $n\in \{1,2,\dots\}$ occurs  ``\emph{with high probability}", abbreviated as \whp, if the probability of $E$ approaches 1 as $n$ tends to $\infty$. Thus,  even if it is not explicitly stated, we always have the assumption ``$n\to \infty$'' when we use the term ``\whp''.} (\whp),
the degree of $\one$ is $k$, conditioned on $\ell(\one)=k$. To compute the variance, first we write
\[	\mean[X_k(X_k-1)]= \sum_{((x,y),(u,v))} \mean[\ind{x,y}\ind{u,v}] = {k \choose 2}{k-2 \choose 2} \frac{(2n-7)!!}{(2n-3)!!}.		\]
Using the last two equations we get
\[ 	  \mean[X_k^2]= \mean[X_k(X_k-1)]+\mean[X_k]= \frac{k(k-1)(k^2-5k+4n-4)}{4(2n-3)(2n-5)}	\]
and 
\begin{align}\label{eq: varianceX_k}
\sigma_k^2 &= \frac{k(k-1)}{2(2n-3)}\cdot \lp \frac{k^2-5k+4n-4}{2(2n-5)}- \frac{k(k-1)}{2(2n-3)} \rp \notag	\\
&= \frac{k(k-1)}{2(2n-3)^2(2n-5)}\lb (2n-k)^2-10n+5k+6 \rb  	
\le \ \frac{k^2(2n-k)^2}{n^3}. 
\end{align}

\begin{theorem}
Let $D(n)= deg(\one)/n$ and let $b$ be a constant such that $0\le b \le 1/2$. We have,
\[	 \lim_{n \to \infty}\pr(D(n)\le b)= 1- \sqrt{1-2b}.			\]
In other words, $D(n)$ converges in distribution to a random variable with density function $(1-2x)^{-1/2}$ in the interval $[0,0.5]$.
\end{theorem}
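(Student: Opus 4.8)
The plan is to condition on the length $\ell(\one)$ and exploit two facts that decouple cleanly: the rescaled length $\ell(\one)/n$ has a nondegenerate limit, while the ``internal'' count $X_k$ is sharply concentrated about its mean. First I would record the exact relation between the degree and $X_k$. Assuming $\one=\ch{1,k+2}$, the smaller block is $\{2,\dots,k+1\}$, of length $k$. Every chord other than $\one$ that meets this block either has its other endpoint inside it (and then does not cross $\one$) or outside it (and then crosses $\one$). Chords of the first kind use exactly $2X_k$ of the $k$ endpoints, while each chord of the second kind uses exactly one. Hence, conditioned on $\{\ell(\one)=k\}$,
\[ deg(\one) = k - 2X_k. \]
Writing $g(\alpha)=\alpha-\alpha^2/2$ and recalling $\mu_k\sim k^2/(4n)$ from \eqref{eq: meanX_k}, this identity says $deg(\one)/n \approx (\ell/n) - 2\mu_\ell/n \approx g(\ell/n)$ once $X_\ell$ is replaced by its conditional mean.

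Next I would pin down the two limiting ingredients. From the stated distribution of $\ell(\one)$, for any $t\in[0,1]$,
\[ \pr\lp \ell(\one)\le tn\rp = \sum_{k\le tn}\frac{2}{2n-1} + O(1/n) \longrightarrow t, \]
so $\ell(\one)/n$ converges in distribution to a random variable $U$ uniform on $[0,1]$. For the concentration, the bound \eqref{eq: varianceX_k} gives $\sigma_k^2\le k^2(2n-k)^2/n^3 \le 4n$ uniformly in $0\le k\le n$, so Chebyshev's inequality yields, for every fixed $\eps>0$,
\[ \pr\lp |X_k-\mu_k|\ge \eps n \mid \ell(\one)=k\rp \le \frac{4n}{\eps^2 n^2} = \frac{4}{\eps^2 n}, \]
a bound independent of $k$. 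Since moreover $2\mu_k/n - \tfrac12(k/n)^2 = O(1/n)$ uniformly in $k$ by \eqref{eq: meanX_k}, combining the displayed identity with these two estimates shows that $D(n)-g(\ell(\one)/n)\to 0$ in probability.

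Finally, since $g$ is continuous and strictly increasing on $[0,1]$ with $g([0,1])=[0,1/2]$, the continuous mapping theorem gives $g(\ell(\one)/n)\Rightarrow g(U)$, and Slutsky's lemma promotes this to $D(n)\Rightarrow g(U)$. It then remains to compute the law of $g(U)$: solving $g(u)=u-u^2/2=b$ for the root lying in $[0,1]$ gives $u=1-\sqrt{1-2b}$, so
\[ \pr\lp g(U)\le b\rp = \pr\lp U\le 1-\sqrt{1-2b}\rp = 1-\sqrt{1-2b}, \]
which is exactly the claimed limit; differentiating recovers the density $(1-2b)^{-1/2}$ on $[0,1/2]$.

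The main obstacle is this last combination step, where the two sources of randomness must be merged correctly: $X_k$ is controlled only conditionally on $\{\ell(\one)=k\}$, so I have to ensure that the error from replacing $X_{\ell(\one)}$ by its conditional mean still vanishes after averaging over the random length $\ell(\one)$. This is precisely why the uniformity of the Chebyshev bound in $k$ matters; with it, $\pr\lp|D(n)-g(\ell(\one)/n)|\ge \eps\rp$ is dominated by the same $O(1/n)$ term whatever the value of $\ell(\one)$, and the single atom $\pr(\ell(\one)=n-1)=1/(2n-1)$ (the case of equal blocks) is asymptotically negligible and causes no difficulty.
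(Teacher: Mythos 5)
Your proposal is correct, and it rests on exactly the same two pillars as the paper's proof: the conditional identity $deg(\one)=k-2X_k$ given $\ell(\one)=k$, and the first two moments of $X_k$ from \eqref{eq: meanX_k} and \eqref{eq: varianceX_k}. Where you genuinely diverge is in how these are assembled. The paper works directly with $\pr(deg(\one)\le bn)$, expands it as a sum over $k$ weighted by $\tfrac{2}{2n-1}$, applies Chebyshev at the fine scale $\omega k/\sqrt n$, and then counts the indices $k$ in the sets $A$ and $B$ of \eqref{eq: set A_k } and \eqref{eq: set B_k } by solving the quadratic inequality \eqref{eq: t,1}; the limit $1-\sqrt{1-2b}$ emerges as the measure of that index set. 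You instead prove two clean convergence statements --- $\ell(\one)/n\Rightarrow U$ uniform on $[0,1]$, and $D(n)-g(\ell(\one)/n)\to 0$ in probability with $g(\alpha)=\alpha-\alpha^2/2$ --- and then invoke the continuous mapping theorem and Slutsky. Your Chebyshev bound at the coarse scale $\eps n$ (using $\sigma_k^2\le 4n$ uniformly in $k$) is weaker than the paper's but entirely sufficient for this purpose, and the uniformity in $k$ that you flag is indeed the point that makes the averaging over the random length legitimate. What your route buys is transparency: it identifies the limit law structurally as the pushforward of the uniform distribution under $g$, so the appearance of $1-\sqrt{1-2b}$ (the root of $g(u)=b$ in $[0,1]$) and of the density $(1-2x)^{-1/2}$ is explained rather than merely computed, and the $\log n$ bookkeeping of the paper disappears. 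What the paper's more hands-on computation buys is an explicit rate, namely the $O(1/(\log n)^2)$ error in \eqref{eq: degreeupper} and \eqref{eq: degreelower}, which your soft argument does not provide. I see no gap in your argument.
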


\begin{proof}
We want to show that $\lim_{n\to \infty}\pr(deg(\one)\le bn) = 1- \sqrt{1-2b}$. Note that, conditioned on $\ell(\one)=k$, the degree of $\one$ is equal to $k-2X_k$. The mean and the variance of $X_k$ together imply that $X_k$ is concentrated around its expected value $\mu_k$. More specifically, by Chebyshev's inequality and equations \eqref{eq: meanX_k} and \eqref{eq: varianceX_k},
\begin{equation} \label{Cheb}
\pr\lp |X_k-\mu_k| \ge \frac{\omega k}{\sqrt{n}}\rp \le \frac{\sigma_k^2 n}{\omega^2 k^2}\le \frac{(2n-k)^2}{n^2\omega^2} \le \frac{4}{\omega^2} \to 0
\end{equation}
for any $\omega$ approaching infinity. In the rest of the proof, we let $\omega=\log n$ although any $\omega$ approaching $\infty$ sufficiently slowly would work. Now fix a constant $b\in (0,1/2)$. We have
\begin{align*}
\pr(deg(\one) \le bn) &= \sum_{k= 0}^{n-1} \pr(k-2X_k \le bn\  |\  \ell(\one)=k)\cdot  \pr(\ell(\one)=k)	\\
&= \frac{2}{2n-1} \sum_{k=0}^{n-2} \pr(k-2X_k \le bn \ |\  \ell(\one)=k) \\
&\ \ + \frac{1}{2n-1} \cdot \pr(n-1-2X_{n-1} \le bn \  | \  \ell(\one)=n-1).
\end{align*}
Clearly, the last term in this equation is of order $O(1/n)$. Noting that the random variable $X_k$ is defined conditional on the event $\{\ell(\one)=k\}$, we write $\pr \lp (k-bn)/2 \le X_k \rp$ instead of $\pr \lp k-2X_k \le bn\  |\  \ell(\one)=k \rp$ for simplicity. Thus,
\begin{align*}
\pr(deg(\one) \le bn) &=\frac{2}{2n-1}\cdot \sum_{k=0}^{n-2} \pr \lp \frac{k-bn}{2} \le X_k \rp  + O(1/n)\\
&=  \frac{1}{n}\cdot \sum_{k=0}^{n-2} \pr \lp \frac{k-bn}{2} \le X_k\rp +O(1/n).
\end{align*}
Next, we split this sum into two as
\begin{align*}
 \sum_{k=0}^{n-2} \pr \lp \frac{k-bn}{2} \le X_k \rp 
=&  \sum_{k=0}^{n-2}  \pr \lp \frac{k-bn}{2} \le X_k \bigg| |X_k-\mu_k|\le \frac{k\log n}{\sqrt{n}}\rp \cdot \pr\lp |X_k-\mu_k|\le \frac{k\log n}{\sqrt{n}}\rp \\
&\quad +\  \sum_{k=0}^{n-2} \pr \lp \frac{k-bn}{2} \le X_k \bigg| |X_k-\mu_k|> \frac{k\log n}{\sqrt{n}}\rp \cdot \pr\lp |X_k-\mu_k|> \frac{ k\log n}{\sqrt{n}} \rp.
\end{align*}
Since $\pr(\,|X_k-\mu_k|> k\log n/\sqrt n\,) = O((\log n)^{-2})$ by~\eqref{Cheb}, the last sum above is $O(n(\log n)^{-2})$. On the other hand, the first sum on the right side of the equation is equal to
\be \label{eq: firstsum}
 \lp 1-O(1/(\log n)^2)\rp \sum_{k=0}^{n-2} \pr \lp \frac{k-bn}{2} \le X_k \, \bigg| \, |X_k-\mu_k|\le \frac{ k\log n}{\sqrt{n}}\rp.
\ee
Note that
\[	\pr \lp \frac{k-bn}{2} \le X_k \, \Big|  \, |X_k-\mu_k|\le \frac{ k\log n}{\sqrt{n}}\rp =0			\]
unless $k$ belongs to $A$, where
\be	\label{eq: set A_k }
	A:=	 \left\{ k\, : \frac{k-bn}{2}-\frac{k\log n}{\sqrt{n}} \le \mu_k \right\}.
\ee
Thus, the sum in Equation~\eqref{eq: firstsum} is bounded above by the size of $A$, i.e.,
\be 
\sum_{k=0}^{n-2} \pr \lp \frac{k-bn}{2} \le X_k \bigg| |X_k-\mu_k|\le \frac{k\log n}{\sqrt{n}}\rp 	 \le  |A_k|.
\ee
Now let $t:=k/n$, where $t\in [0,1]$. Using~\eqref{eq: meanX_k}, the inequality $\frac{k-bn}{2} - \frac{k\log n}{\sqrt{n}} \le \mu_k$ can be written as
\[
	\frac{t-b}{2}- \frac{t\log n}{\sqrt{n}} - \frac{t(tn-1)}{2(2n-3)} \le 0.
\]
Multiplying by 4 and rearranging, we get
\be  \label{eq: t,1} 
-t^2+2t-2b -\frac{4t\log n}{\sqrt{n}}+\frac{4t-6t^2}{4n-6} \le 0.  		
\ee
The roots of the equation $-t^2+2t-2b=0$
are $1-\sqrt{1-2b}$ and $1+\sqrt{1+2b}$. Moreover, the left side of this equation is negative for $t\in [0,1-\sqrt{1-2b}\,)$ and positive for $(1-\sqrt{1-2b}\,,1]$. Thus, by~\eqref{eq: t,1} 
\[
|A| \le (1-\sqrt{1-2b}+\eps_n)\, n
\]
for any $\eps_n \to 0^+$ as long as $\log n/(\eps_n \sqrt n) \to 0$.
Consequently,
\begin{align} \label{eq: degreeupper}
\pr(deg(\one) \le bn) &\le  \lp 1-O\lp 1/(\log n)^2 \rp \rp \cdot \lp 1-\sqrt{1-2b}\rp+ O\lp 1/(\log n)^2 \rp \notag 		\\
&= 1-\sqrt{1-2b}\,+\, O\lp 1/(\log n)^2 \rp.
\end{align}
To find a lower bound on
\[	\sum_{k=0}^{n-2} \pr \lp \frac{k-bn}{2} \le X_k \bigg| |X_k-\mu_k|\le \frac{k\log n}{\sqrt{n}}\rp,			\]
we note that 
\[	 \pr \lp \frac{k-bn}{2} \le X_k \bigg| |X_k-\mu_k|\le \frac{k\log n}{\sqrt{n}}\rp=1			\]
if $k$ lies in the set $B$, where
\be	\label{eq: set B_k }
	B:=	 \left\{ k\, : \frac{k-bn}{2}+\frac{k\log n}{\sqrt{n}} \le \mu_k \right\}.
\ee
Thus,
\be 
\sum_{k=0}^{n-2} \pr \lp \frac{k-bn}{2} \le X_k \bigg| |X_k-\mu_k|\le \frac{k\log n}{\sqrt{n}}\rp 	 \ge  |B|.
\ee
Now we need to estimate the size of $B$. As in the case of $A$, we let $t=k/n$ and write the inequality in the definition of $B$ in terms of $t$ as
\be  \label{eq: t,2} -t^2+2t-2b +\frac{4t\log n}{\sqrt{n}}+\frac{4t-6t^2}{4n-6} \le 0.  		 \ee
For $0\le t \le1-\sqrt{1-2b}-\eps_n$, the inequality in~\eqref{eq: t,2} is satisfied as long as $\eps_n$ goes to $0$ sufficiently slowly. In particular,~\eqref{eq: t,2} holds for $\eps_n=1/(\log n)^2$ and large enough $n$.
Consequently, analogous to~\eqref{eq: degreeupper}, we have
\be \label{eq: degreelower}
\pr(deg(c) \le bn) \ge  1-\sqrt{1-2b}\,-\, O\lp 1/(\log n)^2 \rp.
\ee
Finally, combining~\eqref{eq: degreeupper} and \eqref{eq: degreelower}, we get
\[  
\lim_{n\to\infty} \pr(deg(c) \le bn)	 =	1-\sqrt{1-2b},	
\] 
which finishes the proof.
\end{proof}


\section{Monolithic chord diagrams}\label{section: MCD}

Monolithic diagrams were introduced by Flajolet and Noy~\cite{FN00}. Using generating functions, they showed that almost all intersection graphs of chord diagrams have one large component and some isolated vertices. Here we give an alternative proof to this result and in the subsequent sections we give several extensions. 

\begin{definition}\label{def: monolithic diagrams}
A chord that connects two consecutive endpoints is called a \textit{simple chord}. Thus, a simple chord is of the form $\ch{i,i+1}$ for some $i\in [2n]$ (addition is always in modulo $2n$). The component containing endpoint 1 is called the \emph{root component}. A chord diagram $\C$ is \textit{monolithic} if 
\vspace{-\topsep}
\begin{enumerate}[(i)] \itemsep0pt \parskip0pt \parsep0pt 
\item $\C$ consists only of the root component and simple chords, and
\item there is no pair of simple chords next to each other in $\C$, that is, there is no $i\in [2n]$ such that both $\ch{i,i+1}$ and $\ch{i+2,i+3}$ are chords in $\C$.
\end{enumerate}
\end{definition}

The original definition given by Flajolet and Noy does not have part (ii) but we include that for convenience in the next sections. As we will see it is not much of a  restriction. 

\begin{theorem}[Flajolet and Noy]\label{FN}
\Whp, $C_n$ is monolithic. Moreover, the number of simple chords in $C_n$ approaches in distribution a Poisson random variable with parameter 1. 
\end{theorem}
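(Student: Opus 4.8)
The plan is to extract the distributional statement from factorial moments and to reduce ``$C_n$ is monolithic \whp'' to a single first-moment estimate. For the Poisson limit, write $S$ for the number of simple chords, indexed by position, and note that a prescribed family of $r$ pairwise \emph{disjoint} simple chords $\ch{i_1,i_1+1},\dots,\ch{i_r,i_r+1}$ occurs with probability $(2n-2r-1)!!/(2n-1)!!\sim(2n)^{-r}$, while overlapping families occur with probability $0$. Since the number of ways to place $r$ pairwise disjoint dominoes on the cycle on $2n$ points is $\tfrac{2n}{2n-r}\binom{2n-r}{r}\sim (2n)^r/r!$ for each fixed $r$, the $r$-th factorial moment satisfies
\[
\mean\lb S(S-1)\cdots(S-r+1)\rb\sim r!\cdot\frac{(2n)^r}{r!}\cdot\frac{1}{(2n)^r}\longrightarrow 1=1^{\,r}.
\]
As $1^{\,r}$ is the $r$-th factorial moment of $\Poisson(1)$, the method of moments gives $S\xrightarrow{d}\Poisson(1)$, which is the ``moreover'' clause. (Adjacent simple chords contribute to $S$ but, by the estimate below, are asymptotically negligible, so they do not disturb the limit.)

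For the monolithic claim the plan is to route everything through the elementary identity
\[
\frac{(2\ell-1)!!\,(2(n-\ell)-1)!!}{(2n-1)!!}=\frac{\binom{n}{\ell}}{\binom{2n}{2\ell}},
\]
which is exactly the probability that a fixed contiguous block of $2\ell$ endpoints is matched internally (equivalently, that its complementary block is). The key reduction I would prove is that a \emph{non}-monolithic $C_n$ must contain an internally matched contiguous block of length $2\ell$ with $2\le \ell\le n-2$ that avoids endpoint $1$, \emph{unless} endpoint $1$ itself lies on a simple chord (an event of probability $2/(2n-1)$). Granting this, a union bound over blocks finishes the argument: the expected number of such blocks is
\[
\sum_{\ell=2}^{n-2}2(n-\ell)\,\frac{\binom{n}{\ell}}{\binom{2n}{2\ell}}=O(1/n),
\]
since the summand is symmetric under $\ell\mapsto n-\ell$, U-shaped with ends $\asymp 1/n$ at $\ell=2$ (and $\asymp 1/n^2$ at $\ell=n-2$ because of the factor $n-\ell$), and geometrically decaying toward the exponentially small middle. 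Hence $\pr(C_n\text{ not monolithic})\le O(1/n)+2/(2n-1)\to 0$.

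The main obstacle, and the step I expect to require the most care, is the structural reduction itself, because a second large component may be non-contiguous (with simple chords nested in its gaps) and may even surround endpoint $1$. I would handle this by a \emph{smallest-span} argument: if some non-simple chord lies outside the root component, then there are two components of size $\ge 2$, and since any two distinct components have non-crossing endpoint sets, at least one such component fits inside a proper arc; choose the size-$\ge 2$ component $K$ of smallest span. Minimality forces every component nested in a gap of $K$ to be a single chord, so the smallest arc spanning $K$ contains only $K$ and isolated chords and is therefore matched internally, giving an internally matched block of length $2\ell$ with $\ell\ge 2$; the presence of another size-$\ge 2$ component outside its span forces $\ell\le n-2$. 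If this block contains endpoint $1$, its complement is an internally matched block of length in $[4,2n-4]$ avoiding $1$. The residual cases (an isolated non-simple chord, for which the chord together with its shorter block is already such an internally matched proper block, and a pair of adjacent simple chords, which form an internally matched block of length $4$) fall into the same framework, with the near-$1$ sub-case absorbed into the event that endpoint $1$ lies on a simple chord. Verifying the laminar/non-crossing structure of components and the claim that the minimal spanning arc is internally matched is the delicate part; once it is in place, the computations are routine.
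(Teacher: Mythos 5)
Your proposal is correct and follows the same two‑pronged strategy as the paper: a first‑moment union bound over internally matched blocks for monolithicity, and convergence of factorial moments for the Poisson limit of the number of simple chords (your exact count $\tfrac{2n}{2n-r}\binom{2n-r}{r}$ of $r$-matchings of the cycle replaces the paper's sandwich $\binom{2n}{r}-2n\binom{2n-2}{r-2}\le \cdot\le \binom{2n}{r}$, but the computation is identical; the parenthetical worry about adjacent simple chords is a non-issue, since they are vertex-disjoint dominoes already counted in the enumeration). The substantive difference is the structural reduction, and there your extra care is not pedantry: the paper asserts without proof that a non-monolithic diagram contains an internally matched block of $k$ chords with $2\le k\le n/2$, and this is false as stated. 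For $n=6$ take the chords $\ch{12,1}$ and $\ch{2,7},\ch{3,8},\ch{4,9},\ch{5,10},\ch{6,11}$: the root component is the simple chord $\ch{12,1}$, so the diagram is not monolithic, yet its only internally matched blocks have $1$, $5$, or $6$ chords. This is precisely the configuration you isolate with the clause ``unless endpoint $1$ lies on a simple chord,'' an event of probability $2/(2n-1)$ that you correctly add to the union bound; so your version of the reduction is the one that actually closes the argument, and the subsequent estimate $\sum_{\ell}2(n-\ell)\binom{n}{\ell}/\binom{2n}{2\ell}=O(1/n)$ checks out. Two minor imprecisions, neither fatal: the assertion ``then there are two components of size $\ge 2$'' is not literally true (the root component may be a single chord), but your residual cases plus the endpoint-$1$ exception cover the missing configurations; and the minimal arc spanning a connected component is internally matched for free (any chord meeting an interior gap of that component stays in the gap), so minimality of span is needed only to place the second size-$\ge 2$ component outside the arc and force $\ell\le n-2$, not for the internal matching itself.
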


\begin{proof}
If a chord diagram $\C$ is not monolithic, then for some $2 \le k \le n/2$, there is a set of $k$ chords whose endpoints form a block of length $2k$. Let $B_k$ denote the number of such sets of $k$ chords in $C_n$. We want to show that $\sum B_k \to 0$, where the sum is over all integer $k$ such that $2\le k \le n/2$. By Markov's inequality, it is enough to show  $\sum \mean[B_k] \to 0$. Since there are $2n$ blocks of $2k$ endpoints, we have 
\[	 \mean[B_k]=2n\frac{(2k-1)!!(2n-2k-1)!!}{(2n-1)!!}	,	\]
and consequently,
\[	\sum_{k=2}^{\lf n/2\rf}\mean[B_k] = 2n \sum_{k=2}^{\lf n/2\rf}\frac{(2k-1)!!(2n-2k-1)!!}{(2n-1)!!}.				\]
Note that, since
\[	
\frac{\mean[B_k]}{\mean[B_{k+1}]}= \frac{2n-2k-1}{2k+1}\ge 1 ,			
\]
$\mean[B_k]$ is decreasing with $k$ for  $k\le \lf n/2\rf -1$. Hence, 
\begin{align*}	
\sum_{k=2}^{\lf n/2\rf}\mean[B_k] = \mean[B_2]+(n/2)\mean[B_3]   \ 	
\le \frac{6n}{(2n-1)(2n-3)} + (n/2) (2n)\frac{5!!(2n-7)!!}{(2n-1)!!} =O\left(\frac 1n\right),
\end{align*}
which shows that $C_n$ is monolithic \whp. 
Finally, the next lemma shows that the number of simple chords converges in distribution to $\Poisson(1)$ as $n\to \infty$.
\end{proof}

Let $L_j$ the number of length $j$ chords in $C_n$. A chord $c$ has length $j$ if $c=\ch{i,i+j+1}$ for some $i\in [2n]$, where the addition is in modulo $2n$. Note that $L_0$ counts the simple chords in $C_n$.
  
\begin{lemma}\label{Y_j}
As $n$ tends to infinity, the random variable $L_j$ converges in distribution to a Poisson random variable with mean $1$, for any $0\le j\le n-2$. 
\end{lemma}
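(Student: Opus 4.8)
The plan is to prove convergence to $\Poisson(1)$ by the method of moments, showing that for each fixed integer $r\ge 1$ the $r$-th factorial moment
\[
\mean\big[(L_j)_r\big]=\mean\big[L_j(L_j-1)\cdots(L_j-r+1)\big]
\]
tends to $1$, which is exactly the $r$-th factorial moment of a Poisson random variable with mean $1$. Since the Poisson law is determined by its moments, convergence of every factorial moment to $1$ forces $L_j\to\Poisson(1)$ in distribution.

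First I would count the potential positions of a length-$j$ chord. Such a chord has the form $\ch{i,i+j+1}$ (addition modulo $2n$), and for $0\le j\le n-2$ the two endpoints are distinct and no two values of $i$ yield the same chord, so there are exactly $2n$ positions. This is precisely where the hypothesis $j\le n-2$ enters: when $j=n-1$ each diameter is produced twice, leaving only $n$ positions and changing the constant. For any fixed chord the probability that it appears in $C_n$ is $(2n-3)!!/(2n-1)!!=1/(2n-1)$, since a diagram containing it is a pairing of the remaining $2n-2$ endpoints. Hence $\mean[L_j]=2n/(2n-1)\to 1$, which already matches the first factorial moment.

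For the higher moments, $\mean[(L_j)_r]$ equals the sum over ordered $r$-tuples of distinct length-$j$ positions of the probability that all $r$ chords are simultaneously present. Two chords can coexist only when they are endpoint-disjoint, so tuples with a shared endpoint contribute $0$, and I would write $\mean[(L_j)_r]=N_r\cdot p_r$, where $N_r$ is the number of ordered $r$-tuples of distinct, pairwise endpoint-disjoint positions and $p_r=(2n-2r-1)!!/(2n-1)!!=\prod_{s=1}^{r}(2n-2s+1)^{-1}\sim(2n)^{-r}$ is the probability that $r$ fixed disjoint chords all appear. The total number of ordered $r$-tuples of distinct positions is $(2n)(2n-1)\cdots(2n-r+1)=(2n)^r(1+O(1/n))$, and I would then argue that the tuples containing an overlapping pair are of lower order. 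The key observation is that a given position $\ch{i,i+j+1}$ shares an endpoint with only two other positions, namely $\ch{i+j+1,\,i+2j+2}$ and $\ch{i-j-1,\,i}$; hence the number of ordered overlapping pairs is $O(n)$ and the number of $r$-tuples containing such a pair is $O(n^{r-1})$. Thus $N_r=(2n)^r(1+O(1/n))$ and $\mean[(L_j)_r]=N_r\,p_r\to 1$.

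The main obstacle, modest though it is, is to verify this overlap count uniformly over the entire range $0\le j\le n-2$, especially at the endpoints: for $j=0$ the positions are the simple chords and overlaps occur exactly between cyclically adjacent simple chords, while for $j$ near $n-2$ the chords are long but their endpoint-overlap structure is unchanged, so the $O(n^{r-1})$ bound persists. Once this uniform bound is established, every factorial moment converges to $1$, and the moment characterization of the Poisson distribution completes the proof.
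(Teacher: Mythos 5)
Your proposal is correct and follows essentially the same route as the paper: the method of moments applied to the (factorial, rather than binomial, which differs only by the factor $r!$) moments of $L_j$, with the degenerate tuples --- those in which two positions share an endpoint --- shown to number $O(n^{r-1})$ and hence to be negligible. Your count of overlapping tuples via the observation that each position $\ch{i,i+j+1}$ shares an endpoint with exactly two others matches the paper's bound of $2n\binom{2n-2}{r-2}$ on bad tuples, and your use of $j\le n-2$ to guarantee $2n$ distinct positions is the same point the paper relies on implicitly.
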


\begin{proof}
We compute the factorial moments $E_r$, where $E_r:=\mean \lb {L_j \choose r} \rb$. Let $\xi_i$ denote the indicator of the event $\{\ch{i,i+j+1} \in C_n\}$.
We have
\[
E_r= \sum_{1\le i_1<i_2\cdots<i_r\le 2n} \pr(\xi_{i_t}=1 \text{ for all } t\in [r] ).		
\]
For  a tuple $(i_1,\dots,i_r)$ in the sum, we have 
\be \label{probability r tuple}
\pr \lp  \prod_{j=1}^r\xi_{i_j}=1 \rp = 
\begin{cases}
0, & \text{if } i_b= i_a+j+1 \text{ for some } a,b\in [r],  \\
\frac{(2n-2r-1)!!}{(2n-1)!!} , & \text{otherwise.}
\end{cases}  
\ee
Hence,
\be \label{upperE_r}	
E_r \le {2n \choose r} \frac{(2n-2r-1)!!}{(2n-1)!!} \to \frac{1}{r!}\,.
\ee
If the probability in~\eqref{probability r tuple} is 0, we call the tuple a \emph{bad tuple}. We obtain an upper bound on the number of bad tuples by choosing an endpoint $i$, then choosing the endpoint $i+j+1$, and then choosing $r-2$ endpoints from the remaining $2n-2$ endpoints. This can be done in $2n{2n-2 \choose r-2}$ ways. Thus,
\be \label{lowerE_r}
E_r\ge \lb {2n\choose r} - 2n{2n-2 \choose r-2}\rb \frac{(2n-2r-1)!!}{(2n-1)!!} \to \frac{1}{r!}.
\ee
By~\eqref{upperE_r}--\eqref{lowerE_r}, the binomial moments of $L_j$ converge to those of a  Poisson random variable with mean 1 and hence $L_j$ converges in distribution to a Poisson random variable with mean 1.
\end{proof}

\section{The $k$-core of $C_n$}\label{sec: k-core}

The $k$-core of a chord diagram $\C$ is the largest subdiagram $S$ of $\C$ with the property that each chord in $S$ crosses at least $k$ other chords in $S$. This means that the minimum degree in the intersection graph of $S$ is at least $k$. In this section we study the size of the $k$-core of $C_n$.

For a chord diagram $\C$, we denote by $\Len{k}(\C)$ the subdiagram of $\C$ consisting of the chords with lengths of at least $k$. For the simplicity of notation, we write \Len{k}\  instead of $\Len{k}(C_n)$. We say that two chords of a chord diagram are \emph{neighbors} if they cross each other.

\begin{theorem}\label{thm: k-core main}
Let $k$ be a function of $n$ such that $k^2=o(n)$ as $n\to \infty$. Then, as $n\to \infty$, the minimum degree in the subdiagram $\Len{k}$ is at least $k$ \whp. Consequently, as $n\to \infty$, the $k$-core of $C_n$ is the subdiagram  $\Len{k}$ \whp.
\end{theorem}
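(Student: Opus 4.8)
The plan is to reduce both assertions to the single statement that $\Len{k}$ has minimum degree at least $k$, and then to establish that statement by a first-moment estimate on the number of ``bad'' chords. I start from the deterministic identity for degrees. If a chord $c$ has length $\ell(c)=\ell$, then exactly $\ell-2X_c$ chords cross $c$, where $X_c$ is the number of chords with both endpoints in the smaller block of $c$ (the quantity denoted $X_k$ in Section~\ref{se: degrees}); among these crossing chords, one has length less than $k$ precisely when it straddles an endpoint of $c$ within circular distance $k$, so if $S_c$ counts these short crossing chords then
\[
deg_{\Len{k}}(c)=deg_{C_n}(c)-S_c=\ell(c)-2X_c-S_c .
\]
This already gives the ``consequently'' part for free: if $\ell(c)<k$ then $deg_{C_n}(c)=\ell(c)-2X_c\le\ell(c)<k$, and since the degree of a chord can only decrease in a subdiagram, $c$ lies in no subdiagram of minimum degree at least $k$; hence $k\text{-core}\subseteq\Len{k}$ deterministically. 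Conversely, once $\Len{k}$ itself has minimum degree at least $k$ \whp, maximality of the $k$-core forces $\Len{k}\subseteq k\text{-core}$, so the two coincide \whp. It therefore suffices to prove the minimum-degree statement.

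Call a chord $c$ \emph{bad} if $\ell(c)\ge k$ but $deg_{\Len{k}}(c)\le k-1$, equivalently $2X_c+S_c\ge\ell(c)-k+1$. Since the expected number of chords of any fixed length is $1+o(1)$ by Lemma~\ref{Y_j}, bounding $\mean[\#\{\text{bad chords}\}]$ reduces to showing $\sum_{\ell\ge k}\pr\big(2X_\ell+S_\ell\ge\ell-k+1\big)\to 0$, where $X_\ell,S_\ell$ refer to a fixed chord of length $\ell$. Here $\mean[X_\ell]\sim\ell^2/(4n)$ is already computed in~\eqref{eq: meanX_k}, and a direct count gives $\mean[S_\ell]=O(k^2/n)$, as there are $O(k^2)$ candidate short straddling pairs at the two endpoints of $c$, each realised with probability $\sim 1/(2n)$. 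I would separate the two possible defects: for $\ell\ge 2k$ a chord can be bad only if $X_\ell>(\ell-2k)/2$ or $S_\ell\ge k$, while the narrow range $k\le\ell\le 2k$ contributes only $O(k^2/n)$ because there $\mean[2X_\ell+S_\ell]=O(k^2/n)$ and the Poisson-type tail below applies.

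The tail bounds come from factorial moments: as the edge indicators of a uniform matching satisfy $\mean\!\big[\binom{X_\ell}{r}\big]\le(\ell^2/(4n))^r/r!$ and $\mean\!\big[\binom{S_\ell}{r}\big]\le(Ck^2/n)^r/r!$, one gets $\pr(X_\ell\ge t)\le(\ell^2/(4n))^t/t!$ and $\pr(S_\ell\ge t)\le(Ck^2/n)^t/t!$. Summing the $S$-contribution gives $O(k^2/n)+2n\,(Ck^2/n)^k/k!\to 0$, and for the $X$-contribution the threshold $t\approx\ell/2$ comfortably exceeds the mean $\ell^2/(4n)$ as long as $\ell$ stays below, say, $n/e$, making $(e\ell/(2n))^{t}$ summably small (the sum over this range being dominated by its smallest terms and by the $O(k^2/n)$ piece near $\ell=k$).

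The main obstacle is the regime of \emph{long} chords, $\ell=\Theta(n)$: there are $\Theta(n)$ of them, the mean $\ell^2/(4n)$ is itself of order $n$, and the plain factorial-moment (Poisson) bound is no longer smaller than $1$, while the variance estimate~\eqref{eq: varianceX_k} yields only $O(1/n)$ per chord through Chebyshev, which cannot survive the union bound over $\Theta(n)$ chords. To handle this range I would invoke a genuine exponential concentration bound for $X_\ell$ at a threshold $t\ge 2\,\mean[X_\ell]$: either the Chernoff estimate $\pr(X_\ell\ge t)\le e^{-\lambda}(e\lambda/t)^{t}$ with $\lambda=\mean[X_\ell]$, justified by the negative association of the edge indicators of a random matching, or, equivalently and self-containedly, a direct evaluation of $\pr(deg_{C_n}(c)\le k)$ through the double-factorial formula for the matchings of the two blocks. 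Either route gives a bound exponentially small in $n$ for each long chord, which defeats the $\Theta(n)$ union bound; assembling the three ranges yields $\mean[\#\{\text{bad chords}\}]\to 0$ and hence that $\Len{k}$ has minimum degree at least $k$ \whp.
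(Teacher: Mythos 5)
Your skeleton coincides with the paper's: both arguments reduce the theorem to showing that $\Len{k}$ has minimum degree at least $k$ (the inclusion $k\text{-core}\subseteq\Len{k}$ being deterministic since $deg(c)\le\ell(c)$), and then run a first-moment bound over ``bad'' chords, witnessing badness by an excess of chords confined to a neighbourhood of $c$. Your identity $deg_{\Len{k}}(c)=\ell(c)-2X_c-S_c$ is correct, and your two witnesses ($X_c$ large, or $S_c\ge k$) are a mild repackaging of the paper's single witness, namely at least $r=\lfloor(\ell-k+2)/2\rfloor$ chords with both endpoints in the window of $2k+\ell+2$ consecutive points around $c$. The paper then sums the resulting factorial-moment quantity $E_t$ over all lengths at once, controlling it through the ratio $F_r/F_{r+1}$ on three ranges of $r$; that ratio analysis is its substitute for your case split into short, moderate, and long chords.

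The genuine flaw sits exactly where you locate the main obstacle. The edge indicators of a uniform perfect matching are \emph{not} negatively associated: for disjoint pairs, $\pr(\ind{x,y}=\ind{u,v}=1)=\frac{1}{(2n-1)(2n-3)}>\frac{1}{(2n-1)^2}=\pr(\ind{x,y}=1)\pr(\ind{u,v}=1)$, so they are pairwise \emph{positively} correlated, whereas negative association would force the opposite inequality. The Chernoff bound $\pr(X_\ell\ge t)\le e^{-\lambda}(e\lambda/t)^t$ therefore cannot be imported that way, and your two proposed routes for $\ell=\Theta(n)$ are not ``equivalent'': only the direct double-factorial evaluation is validly grounded. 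That route does work --- one has $\pr(X_\ell\ge t)\le\mean\binom{X_\ell}{t}=\frac{1}{t!}\prod_{i=0}^{t-1}\frac{(\ell-2i)(\ell-2i-1)}{2(2n-3-2i)}$, and the numerator $\ell!/(\ell-2t)!$ is far smaller than the $\ell^{2t}$ implicit in your crude Poisson bound precisely when $t\approx\ell/2$, which is what makes the tail exponentially small for long chords --- but this is exactly the computation the paper carries out, so the decisive regime still has to be written out rather than named. A smaller point: your bound $2n(Ck^2/n)^k/k!$ for the $S$-contribution does not tend to $0$ when $k=1$ (it is $\Theta(1)$); that case must be disposed of separately, which is easy since no simple chord crosses any chord and hence $S_c\equiv 0$ there.
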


\begin{proof}
Since for any chord $c$ we have $deg(c)\le \ell(c)$, the $k$-core of $C_n$ is a subset of $\Len{k}$. Hence, for $k$ as in the theorem, we only need to show that each chord in $\Len{k}$ has at least $k$ neighbors from $\Len{k}$.
 
Let us call a chord $\ch{x,y}$ of $\Len{k}$ a \emph{bad chord} if $\ch{x,y}$ has fewer than $k$ neighbors in \Len{k}.
It is  enough to show that the expected value for the number of bad chords tends to 0.
For any $t\ge k$, let $r=r(t)$ be the first integer larger than $(t-k)/2$, i.e.,
\be \label{eq: r def}
r=r(t):= \lf (t-k+2)/2 \rf	.		
\ee
Thus, $r(t+1)=r(t)$ if $(t-k)$ is even and $r(t+1)=r(t)+1$ otherwise.
 
Any chord with one endpoint in $[i+1,i+t]$ and one endpoint outside $[i-k,i+t+1+k]$ has length at least $k$. Hence, 
if the chord $c=\ch{i,i+t+1}$ is bad, then, other than $c$, there are at least $r$ chords with both endpoints in
$[i-k,i+t+1+k]$. So the number of bad chords of length $t$ is bounded above by the  number of pairs $(c,\{c_1,\dots,c_r\})$, where $c=\ch{i,i+t+1} \in C_n$ for some $i$,  $\{c_1,\dots,c_r\}$ is a set of chords such that
all the endpoints of $c_j$'s lie in $[i-k,i+t+1+k]$, and $c \not \in \{c_1,\dots,c_r\}$. For $t\ge k$, let $E_t$ be the sum of expected values of such pairs as $i$ varies from $1$ to $2n$. Since there are $2k+t$ endpoints in $[i-k,i+t+1+k]$ other than $i$ and $i+t+1$, 
we have
\be \label{eq: expected bad pairs}	E_t = (2n){2k+t \choose 2r} \frac{(2r-1)!!(2n-2r-3)!!}{(2n-1)!!}.			\ee

We will show that $\sum_{t=k}^{n-1}E_t \to 0$ as $n\to \infty$. First, by~\eqref{eq: r def} we have  $t= 2r+k-2$ or $t=2r+k-1$, depending on the parity of $t-k$. In particular, we have $t\le 2r+k$ for any $t$. Using this upper bound for $t$ in~\eqref{eq: expected bad pairs}, we write
\[	E_t\le 	(2n) {3k+2r \choose 2r}\, \frac{(2r-1)!!(2n-2r-3)!!}{(2n-1)!!}.	\]
Since $r$ is increasing with $t$, the maximum value of $r$ occurs at $t=n-1$ and $r(n-1)=\lf (n+1-k)/2\rf$. Let $N=\lf (n+1-k)/2\rf$. Since each $r$ (except possibly for $r=N$) has two preimages, we write
\[
\sum_{t=k}^{n-1}E_t \le 2\,(2n)\  \sum_{r=1}^{N}  {3k+2r \choose 2r}\frac{(2r-1)!!(2n-2r-3)!!}{(2n-1)!!}	= 4n\  \sum_{r=1}^{N} F_r\, ,
\]
where 
\[	F_r:=	{3k+2r \choose 2r}\frac{(2r-1)!!(2n-2r-3)!!}{(2n-1)!!}.		\]
An easy calculation shows that $F_1 \le 2k^2/n^2$. We will show that this is the main contribution to the sum.
In order to see how the terms $F_r$ are changing, we take the ratio of two consecutive terms and obtain
\begin{equation}\label{gammar}	
\gamma_r:=\frac{F_r}{F_{r+1}} = \frac{(2r+2)(2n-2r-3)}{(3k+2r+2)(3k+2r+1)}.		
\end{equation}
From this fraction we see that the terms $F_r$ are initially decreasing rapidly, and as $r$ gets closer to the upper bound of the sum they almost stabilize. To be more precise, we divide the set $[1,N ]$ into three disjoint pieces $I_1, I_2$, and $I_3$ such that
\[	I_1= \big [1,N_1 \big], \quad I_2= \big[N_1+1, N_2 \big], \quad I_3= \big[N_2+1, N \big]	,	\]
where $N_1=\lf n/10\rf $ and $N_2=\lf 4n/9\rf $. Using~\eqref{gammar}, it is easy to verify that
\[
\gamma_r \ge
\begin{cases}
8, & \text{ for } r\in I_1,\\
1, & \text{ for } r\in I_2,\\
1/2, & \text{ for } r\in I_3.
\end{cases}
\]
In the first interval the terms are decreasing faster than a geometric sequence with ratio $1/8$, so we have $F_{N_1}\le (1/8)^{N_1-1}\, F_1 \text{ and }$
\[	 \sum_{r \in I_1}F_r \le 2F_1. 	\]
In the second interval the terms are decreasing, so we have $F_{N_2}\le F_{N_1} \text{ and }$
\[	 \sum_{r \in I_2}F_r \le n\cdot F_{N_1} \le n(1/8)^{N_1-1}\cdot F_1.	\]
In the last interval we just bound the sum above by a geometric series with a constant ratio 2. As a result, we have
\[	\sum_{r\in I_3}F_r= O(F_{N_2}\cdot 2^{N-N_2})= O(F_1\cdot (1/8)^{N_1}\cdot 2^{N-N_2})		\]
Combining all three sums, we get 
\[	\sum_{r=1}^n F_r =O(F_1)= O(k^2/n^2).		\]
Consequently,
\[	\sum_{t=k}^{n-1}E_t = O(k^2/n),	\]
which goes to $0$ by the assumption of the theorem.
\end{proof}

Recall from Section~\ref{section: MCD} that $L_j$ denotes the number of chords of length $j$.
The next result is an extension of Lemma~\ref{Y_j}.

\begin{theorem}\label{MDP-SC}
Let $k$ be any nonnegative fixed integer. We have 
\[		(L_0,L_1,\dots,L_k) \stackrel{d}{\longrightarrow} (T_0,T_1,\dots,T_k), 	\]
where $T_i$'s are independent copies of the Poisson random variable with parameter $1$.
\end{theorem}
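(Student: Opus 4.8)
The plan is to prove joint convergence via the multivariate method of factorial moments, which will also deliver the asymptotic independence for free. Since the limit vector $(T_0,\dots,T_k)$ consists of independent $\Poisson(1)$ random variables, its mixed factorial moments factor as
\[
\mean\left[\prod_{i=0}^k \binom{T_i}{r_i}\right]=\prod_{i=0}^k \mean\left[\binom{T_i}{r_i}\right]=\prod_{i=0}^k \frac{1}{r_i!}
\]
for every fixed tuple $(r_0,\dots,r_k)$ of nonnegative integers. By the multivariate analogue of the factorial-moment criterion already used in Lemma~\ref{Y_j}, it therefore suffices to show that
\[
\mean\left[\prod_{i=0}^k \binom{L_i}{r_i}\right]\longrightarrow \prod_{i=0}^k\frac{1}{r_i!}\qquad(n\to\infty)
\]
for each such tuple. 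The marginal versions of this are exactly Lemma~\ref{Y_j}; the genuinely new content is the product (joint) form of these moments.

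First I would record the combinatorial meaning of the mixed moment. Writing $r:=r_0+\dots+r_k$, the product $\prod_i\binom{L_i}{r_i}$ counts the collections that pick an unordered set of $r_i$ distinct length-$i$ chords present in $C_n$, for each $0\le i\le k$. Each length-$i$ chord has the unique form $\ch{a,a+i+1}$ (mod $2n$) for some $a\in[2n]$, so there are exactly $2n$ length-$i$ chords and chords of distinct lengths are automatically distinct. Taking expectations, the mixed moment equals the sum, over all such collections, of the probability that the chosen $r$ chords all appear in $C_n$. If these chords are pairwise endpoint-disjoint they use $2r$ distinct endpoints and this probability is $(2n-2r-1)!!/(2n-1)!!$; if two of them share an endpoint the probability is $0$, since $C_n$ is a perfect matching. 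Hence
\[
\mean\left[\prod_{i=0}^k \binom{L_i}{r_i}\right]=N\cdot\frac{(2n-2r-1)!!}{(2n-1)!!},
\]
where $N$ is the number of endpoint-disjoint collections.

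Next I would estimate $N$ by matching bounds, paralleling the two-sided estimate \eqref{upperE_r}--\eqref{lowerE_r} of Lemma~\ref{Y_j}. Dropping the disjointness constraint gives $N\le \prod_i\binom{2n}{r_i}$; combined with $\binom{2n}{r_i}\sim (2n)^{r_i}/r_i!$ and $(2n-2r-1)!!/(2n-1)!!\sim (2n)^{-r}$, this yields the upper bound $\prod_i 1/r_i!+o(1)$. For the lower bound I would count the collections that fail to be endpoint-disjoint: such a collection contains two chords sharing an endpoint, and there are only $O(n^{r-1})$ of these, since one may fix the colliding pair in $O(n)$ ways (after choosing one of the $O(n)$ chords of length at most $k$, a partner sharing one of its two endpoints has only $O(1)$ choices, as each endpoint lies on at most one chord of each fixed length) and then choose the remaining $r-2$ chords freely in $O(n^{r-2})$ ways. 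Thus $N=\bigl(1-O(1/n)\bigr)\prod_i\binom{2n}{r_i}$, and the lower estimate matches the upper one.

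The main (and essentially only) obstacle beyond the single-variable computation is this overlap bookkeeping: one must be sure that endpoint collisions \emph{across} the different length classes are of lower order. This is routine precisely because $k$ is a fixed constant, so every length class contains $\Theta(n)$ chords and every endpoint meets only $O(1)$ chords of bounded length; the collision count is therefore $O(n^{r-1})=o(n^{r})$, uniformly over the finitely many relevant configurations. Combining the two bounds gives $\mean[\prod_i\binom{L_i}{r_i}]\to\prod_i 1/r_i!$, and the multivariate moment theorem then yields $(L_0,\dots,L_k)\stackrel{d}{\longrightarrow}(T_0,\dots,T_k)$ with the $T_i$ independent $\Poisson(1)$, as claimed.
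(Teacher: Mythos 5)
Your proposal is correct and follows essentially the same route as the paper: both compute the mixed factorial moments $\mean\big[\prod_{i}\binom{L_i}{r_i}\big]$ and sandwich them between two counting bounds, each converging to $\prod_i 1/r_i!$, then invoke the multivariate factorial-moment criterion. The only difference is cosmetic: for the lower bound you subtract the $O(n^{r-1})$ collections containing an endpoint collision, whereas the paper instead exhibits $\binom{2n-s(k+1)}{s}$ well-separated choices of initial endpoints that are guaranteed collision-free; both yield the same asymptotics.
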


\begin{proof}
We will show that the factorial moments converge to those of independent $\Poisson(1)$ random variables.
Let  $\mathbf r=(r_0,\dots,r_k)$ be a vector of nonnegative integers, and let $s=\sum_{i=0}^{k}r_i$. Let 
\[	 E_{\mathbf r}:= \mean \lb {L_0 \choose r_0}{L_1 \choose r_1}\cdots {L_k \choose r_k}\rb.	\]
To choose a chord of length $j$, it is enough to choose the initial endpoint; if $i$ is the initial endpoint, then the chord of length $j$ will be $\ch{i,i+j+1}$, where we take the value of $i+j+1$ in modulo $2n$ in case $i+j+1>2n$. Hence,
\be \label{upper E_r}
E_{\mathbf{r}} \le {2n \choose s}{s \choose r_0,\dots,r_k} \frac{(2n-2s-1)!!}{(2n-1)!!}. 
\ee
On the right side of this inequality, the first factor is an over-count for the number of ways to choose the initial endpoints of the chords. Once we choose the initial endpoints of the chords, we can partition them into $k+1$ sets, where the endpoints in the $j$th set will correspond to the initial endpoints of length-$j$ chords. Next, there are $(2n-2s-1)!!$ ways to pair the remaining endpoints. The denominator $(2n-1)!!$ is the number of all chord diagrams. After cancellations, we can write the right side of Equation~\eqref{upper E_r} as 
\be \label{upper again}
\lp \prod_{i=0}^{k} \frac{1}{r_i!}\rp \cdot \lp \prod_{j=0}^{s-1}\frac{2n-j}{2n-2j-1}\rp= \lp 1+O\lp \frac{s^2}{n}\rp \rp \cdot \prod_{i=0}^{k} \frac{1}{r_i!}.
\ee
For a lower bound on $E_{\bold r}$, consider the points satisfying the inequalities
\be \label{first points rule}
	1\le i_1<i_2-(k+1)<\cdots<i_s-(s-1)(k+1)\le 2n-s(k+1).
\ee
Note that any $s$-tuple $(i_1,\dots,i_s)$ satisfying~\eqref{first points rule} can be the initial endpoints of the $s$ chords, regardless of how they are partitioned into $k+1$ sets of cardinalities $r_0,\dots,r_k$. Introducing $j_t=i_t-(t-1)(k+1)$, we have
\[	1\le j_1<j_2<\cdots<j_s\le 2n-s(k+1), 	\]
and hence the number of ways to choose the initial points meeting the condition in~\eqref{first points rule} is ${2n-s(k+1) \choose s}$. Thus,
\[	
 E_{\bold r} \ge {2n-s(k+1) \choose s} {s \choose r_0,\dots,r_k} \frac{(2n-2s-1)!!}{(2n-1)!!}. 
\]
Similar to~\eqref{upper again}, for the right side of the above inequality, we have
\be \label{lower again}
\lp \prod_{i=0}^{k} \frac{1}{r_i!}\rp \cdot \lp \prod_{j=0}^{s-1}\frac{2n-s(k+1)-j}{2n-2j-1}\rp= \lp 1+O\lp \frac{s^2k}{n}\rp \rp \cdot \prod_{i=0}^{k} \frac{1}{r_i!}.
\ee
Combining the upper and lower bounds for $E_{\bold r}$, we get
\[	 E_{\bold r} \to \prod_{i=0}^{k} \frac{1}{r_i!},	\]
from which the desired result follows.
\end{proof}

The next theorem is an extension of Theorem~\ref{FN}, which gives the $1$-core of $C_n$.

\begin{corollary}\label{cor: FN first extension}
For any fixed positive integer $k$, the size of the $k$-core of $C_n$ converges in distribution to $n- P_k$, where $P_k$ is a Poisson random variable with mean~$k$.
\end{corollary}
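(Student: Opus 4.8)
The plan is to combine the two theorems already established in this section; no new estimates are required. Theorem~\ref{thm: k-core main} identifies the $k$-core of $C_n$ with the length-based subdiagram $\Len{k}$ \whp\ whenever $k^2=o(n)$, which a fixed $k$ certainly satisfies. Theorem~\ref{MDP-SC} pins down the joint asymptotic law of the short-chord counts $L_0,\dots,L_{k-1}$. The whole argument then reduces to bookkeeping: the chords deleted in passing from $C_n$ to its $k$-core are exactly the short chords, and there are asymptotically $\Poisson(k)$ of them.

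First I would record the deterministic identity relating the core size to the short-chord counts. Since a chord lies in $\Len{k}$ precisely when its length is at least $k$, the subdiagram $\Len{k}$ is obtained from $C_n$ by deleting the chords of length $0,1,\dots,k-1$, so its size is $n-\sum_{j=0}^{k-1}L_j$. Writing $S_n$ for the size of the $k$-core and $R_n:=n-S_n$ for the number of deleted chords, Theorem~\ref{thm: k-core main} gives that the event $A_n:=\{k\text{-core}=\Len{k}\}$ satisfies $\pr(A_n)\to 1$, and on $A_n$ we have the exact equality $R_n=\sum_{j=0}^{k-1}L_j$.

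Next I would invoke Theorem~\ref{MDP-SC}, applied to the first $k$ coordinates $L_0,\dots,L_{k-1}$, to conclude that this vector converges jointly in distribution to a vector of $k$ independent $\Poisson(1)$ random variables. Since a sum of $k$ independent $\Poisson(1)$ variables is $\Poisson(k)$, the continuous mapping theorem applied to the (continuous) addition map yields $\sum_{j=0}^{k-1}L_j \xrightarrow{d} P_k$, where $P_k\sim\Poisson(k)$.

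Finally I would stitch these together. Because $R_n$ and $\sum_{j=0}^{k-1}L_j$ agree on $A_n$ and $\pr(A_n^c)\to 0$, the difference $R_n-\sum_{j=0}^{k-1}L_j$ tends to $0$ in probability; by Slutsky's theorem $R_n\xrightarrow{d} P_k$ as well, which is exactly the assertion that $S_n=n-R_n$ converges in distribution to $n-P_k$. There is no serious obstacle here, as all the analytic work was done in Theorems~\ref{thm: k-core main} and~\ref{MDP-SC}. The only point requiring a little care is the last step, where convergence in distribution must be transferred from $\sum_{j=0}^{k-1}L_j$ to $R_n$ across the high-probability event $A_n$; this is precisely where the \whp\ conclusion of Theorem~\ref{thm: k-core main}, rather than a merely expectation-level bound, is what is needed.
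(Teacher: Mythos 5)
Your argument is exactly the paper's: identify the $k$-core with $\Len{k}$ via Theorem~\ref{thm: k-core main}, then apply Theorem~\ref{MDP-SC} to get $\sum_{j=0}^{k-1}L_j\xrightarrow{d}\Poisson(k)$. The only difference is that you spell out the Slutsky-type transfer across the high-probability event, which the paper leaves implicit; the proof is correct.
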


\begin{proof}
By Theorem \ref{thm: k-core main}, the subdiagram $\Len{k}$ is the $k$-core of $C_n$ \whp. By Theorem~\ref{MDP-SC}, the number of chords with lengths smaller than $k$, i.e., the sum $\sum_{j=0}^{k-1}L_j$, converges in distribution to the sum of $k$ independent $\Poisson(1)$ random variables. The sum of independent $\Poisson(\lambda_j)$ random variables is another Poisson random variable with mean $\sum_{j}\lambda_j$, which finishes the proof.
\end{proof}

Next we find an upper bound for the number of `small-length chords'. For a positive integer $k$, let $Z_k=\sum_{j=0}^{k-1}L_j$.

\begin{lemma}\label{small length chords} 
If $k\to \infty$ and $k\le n-1$, then $Z_k/k \to 1$ in probability.
\end{lemma}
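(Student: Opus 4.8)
The plan is to apply the second moment method (Chebyshev's inequality) to $Z_k$, since both moments of $Z_k$ are computable by the same indicator bookkeeping used in Lemma~\ref{Y_j}. I would write $Z_k=\sum_{c}I_c$, where the sum ranges over all potential chords $c$ of length at most $k-1$ and $I_c=\ind{c\in C_n}$. For each $0\le j\le k-1\le n-2$ there are exactly $2n$ potential chords of length $j$ (one per initial endpoint, with no coincidences because $j<n-1$), so the number of short chords is $N:=2nk$. Since $\pr(c\in C_n)=(2n-3)!!/(2n-1)!!=1/(2n-1)$ for every fixed chord $c$, linearity gives $\mean[Z_k]=N/(2n-1)=2nk/(2n-1)$, and in particular $\mean[Z_k]/k\to 1$, which already identifies the target constant.

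For the variance I would compute the second factorial moment $\mean[Z_k(Z_k-1)]=\sum_{c\ne c'}\mean[I_cI_{c'}]$. Two distinct chords sharing an endpoint can never coexist in a chord diagram, so such pairs contribute $0$; two disjoint chords both lie in $C_n$ with probability $(2n-5)!!/(2n-1)!!=1/[(2n-1)(2n-3)]$. Counting, there are $N(N-1)$ ordered pairs of distinct short chords, of which $2n\cdot 2k(2k-1)$ share an endpoint, because each endpoint meets exactly $2k$ short chords (its $k$ nearest neighbours on each side, distinct as long as $k\le n-1$). Hence $\mean[Z_k(Z_k-1)]=\big[N(N-1)-4nk(2k-1)\big]/[(2n-1)(2n-3)]$, and $\var(Z_k)=\mean[Z_k(Z_k-1)]+\mean[Z_k]-\mean[Z_k]^2$ becomes fully explicit.

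The crux is to verify that the $\Theta(k^2)$ contributions cancel. Writing $p_1=1/(2n-1)$ and $p_2=1/[(2n-1)(2n-3)]$, one has $\mean[Z_k]=Np_1$ and $\var(Z_k)=Np_1+N^2(p_2-p_1^2)-Np_2-4nk(2k-1)p_2$. The two quadratic pieces combine through $p_2-p_1^2=2/[(2n-1)^2(2n-3)]$, giving $N^2(p_2-p_1^2)=O(k^2/n)$; the shared-endpoint correction satisfies $4nk(2k-1)p_2=O(k^2/n)$ and $Np_2=O(k/n)$, both negligible, while $Np_1=\Theta(k)$. Thus $\var(Z_k)=O(k)+O(k^2/n)$. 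Since $k\to\infty$ forces $k=o(k^2)$ and $n\to\infty$ forces $k^2/n=o(k^2)$, we get $\var(Z_k)=o(k^2)$, so Chebyshev's inequality yields $Z_k/\mean[Z_k]\to 1$ in probability; combined with $\mean[Z_k]\sim k$ this gives $Z_k/k\to 1$ in probability. The only delicate point is retaining enough precision in the algebra of $p_1$ and $p_2$ to witness the exact cancellation of the leading $k^2$ terms; the rest is routine counting.
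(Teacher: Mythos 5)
Your proposal is correct and follows essentially the same route as the paper: compute $\mean[Z_k]=2nk/(2n-1)$ and the second factorial moment by counting pairs of disjoint short chords, check that the $\Theta(k^2)$ terms cancel so that $\var(Z_k)=O(k)+O(k^2/n)$, and conclude by Chebyshev. Your count $N(N-1)-4nk(2k-1)$ of disjoint ordered pairs agrees exactly with the paper's $2nk\,(2nk-4k+1)$, so the two arguments coincide up to bookkeeping.
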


\begin{proof}
We find the first and the second moments of $Z_k$ and apply Chebyshev's inequality.
For $j<n-1$, there are exactly $2n$ chords of length $j$, which are $\ch{i,i+j+1}$ for $1\le i\le 2n$. Each of these chords exists in $C_n$ with probability  $1/(2n-1)$. Then, $\mean[L_j]= 2n/(2n-1)$, and consequently
\be \label{meanZk}	
\mu:= \mean[Z_k]= \sum_{j=0}^{k-1}\mean[L_j] = \frac{2nk}{2n-1} = k+\frac{k}{2n-1}.		
\ee
Now, $\mean[Z_k^2]=\mean[Z_k(Z_k-1)]+\mean[Z_k]$ and 
\[ 
\mean[Z_k(Z_k-1)]= \sum_{\ch{a,b},\ch{c,d}}\mean\big[\ind{a,b}\ind{c,d}\big], 
\]
where the sum is over all pairs of distinct chords of lengths at most $k-1$. A chord of length at most $k-1$ can be chosen in $2nk$ ways. Once such a chord, say $\ch{a,b}$, is chosen, there are $2nk-(4k-1)$ ways to choose another chord $\ch{c,d}$ with length at most $k-1$ since $\{a,b\} \cap \{c,d\} = \emptyset$. After these two chords are chosen, the rest of the endpoints can be paired in $(2n-5)!!$, so
\be \label{meanZk2}
\mean[Z_k(Z_k-1)]= \frac{2nk(2nk- 4k+1)(2n-5)!!}{(2n-1)!!}= \frac{2nk(2nk-4k+1)}{(2n-1)(2n-3)}.  
\ee
Using~\eqref{meanZk} and \eqref{meanZk2}, after straightforward computations we get
\[   
\sigma^2:= \mean[Z_k^2]-\mu^2=k+O\lp \frac{k^2}{n}\rp.
\]
Let $\eps$ be a positive constant. Using $|\mu-k|\le 1$ and Chebyshev's inequality,
\[	
\pr\big ( |Z_k-k|>\eps k\big) \   \le    \ \pr\big(|Z_k-\mu|> \eps k-1 \big)  \   \le  \ \frac{\sigma^2}{(\eps k-1)^2}\ =\  O\lp \frac{1}{k}\rp, 
\]
which finishes the proof.
\end{proof}

Lemma \ref{small length chords} combined with Theorem~\ref{thm: k-core main} gives us the approximate size of the $k$-core in $C_n$ for $k$ growing to infinity slowly enough.

\begin{corollary}
Let $k \to \infty$ slowly enough so that $k^2/n\to 0$ as $n$ tends to infinity. Let $R_k$ denote the size of the $k$-core in $C_n$. Then, $(n-R_k)/k \to 1$ in probability as $n\to \infty$. 			\qed
\end{corollary}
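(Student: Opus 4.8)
The plan is to combine the two results that immediately precede this corollary, which together pin down both the identity of the $k$-core and the size of its complement; once they are in hand, the corollary follows by a short union-bound argument.

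First I would invoke Theorem~\ref{thm: k-core main}. The hypothesis $k^2/n \to 0$ is precisely the condition $k^2 = o(n)$ required there, so \whp\ the $k$-core of $C_n$ coincides with the subdiagram $\Len{k}$ of chords of length at least $k$. The chords excluded from $\Len{k}$ are exactly those of length at most $k-1$, whose number is $Z_k$; since $C_n$ has exactly $n$ chords, the size of $\Len{k}$ equals $n - Z_k$. Hence on the high-probability event $A_n := \{\text{the $k$-core equals } \Len{k}\}$ we have $R_k = n - Z_k$, i.e.\ $n - R_k = Z_k$.

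Next I would apply Lemma~\ref{small length chords}. Its hypotheses are met: $k \to \infty$ by assumption, and $k \le n-1$ for all large $n$ because $k^2/n \to 0$ forces $k = o(\sqrt{n})$. The lemma then gives $Z_k/k \to 1$ in probability. It remains to transfer this convergence from $Z_k/k$ to $(n - R_k)/k$ across the event $A_n$: for any fixed $\eps > 0$,
\[
\pr\!\lp \left| \frac{n - R_k}{k} - 1 \right| > \eps \rp \le \pr(A_n^c) + \pr\!\lp \left| \frac{Z_k}{k} - 1 \right| > \eps \rp,
\]
since on $A_n$ the two ratios agree. The first term tends to $0$ by Theorem~\ref{thm: k-core main} and the second by Lemma~\ref{small length chords}, yielding $(n - R_k)/k \to 1$ in probability.

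There is no substantial obstacle remaining, as the real work lies in Theorem~\ref{thm: k-core main} and Lemma~\ref{small length chords}, both already established. The only point demanding a little care is the last step, where a \whp\ statement (the $k$-core equals $\Len{k}$) must be merged with a convergence-in-probability statement (about $Z_k/k$); the displayed union bound handles this cleanly by controlling $A_n^c$ and the deviation event for $Z_k$ separately, rather than trying to argue directly on their intersection.
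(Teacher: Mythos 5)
Your proof is correct and is exactly the argument the paper intends: the corollary is stated with \qed precisely because it follows by combining Theorem~\ref{thm: k-core main} (the $k$-core equals $\Len{k}$ \whp) with Lemma~\ref{small length chords} ($Z_k/k \to 1$ in probability), which is what you do. Your explicit union bound merging the \whp{} event with the convergence-in-probability statement is a careful spelling-out of the step the paper leaves implicit.
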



\section{Directed intersection graphs}\label{sec: directed chord diagrams}

In this section we consider chord diagrams whose crossings are oriented. For any chord diagram $\C$, we think of the chords as pieces of strings. If two chords $c$ and $d$ of $\C$ cross each other, then either $c$ over-crosses $d$, or $d$ over-crosses $c$. If $c$ over-crosses $d$, then we also say that $d$ under-crosses $c$, and vice versa. With this interpretation, each crossing can be oriented in one of two ways. Given a chord diagram $\C$ with $m$ crossings, there are $2^m$ orientations of $\C$. 

The directed intersection graph $G_{\D}$ is obtained from an oriented chord diagram $\D$ as follows: if a chord $c\in \D$ over-crosses a chord $d \in \D$, then the edge joining $c$ and $d$ in $G_{\D}$ is directed toward the vertex $d$. We say that an oriented chord diagram $\D$ is {\em strongly connected} if $G_{\D}$ is strongly connected as a graph.

We denote by $D_n$ an oriented chord diagram obtained from $C_n$ by choosing one of the possible orientations uniformly at random. More precisely, we choose a chord diagram uniformly at random from all chord diagrams, and then we orient each of its crossings in one of the two possible ways by flipping a fair coin. If $\D$ is an oriented chord diagram, then 
\[
\pr(D_n=\D) = \frac{1}{(2n-1)!!2^{cr(\D)}}\,,
\]
where $cr(\D)$ denotes the number of crossings in $\D$.

The Flajolet-Noy Theorem (Theorem \ref{FN}) tells us that almost all (unoriented) chord diagrams are monolithic. Consequently, as $n\to \infty$, \whp, the intersection graph of $C_n$ has a giant component of size $n-X_n$, where $X_n$ is the number of isolated vertices. The same theorem also gives that $X_n$ converges in distribution to $\Poisson(1)$. Here we will prove an analogous theorem for the random directed intersection graph. We first introduce some terminology that will be helpful later.

Let $\D$ be an oriented chord diagram, $c$ be a chord belonging to $\D$, and $S$ be a subset of the chords of $\D$, where $S$ does not contain $c$. We say that $c$ over-crosses (under-crosses) $S$ if $c$ over-crosses (under-crosses) any chord $d$ of $S$ that it crosses. When $c$ over-crosses $S$ in $\D$, in the intersection graph $G_{\D}$ any edge $cd$ with $d \in S$ is directed toward $d$. Similarly, when  $c$ under-crosses $S$ in $\D$, in the intersection graph $G_{\D}$ any edge $cd$ with $d \in S$ is directed toward $c$. 

In this section we denote by $\Len{k}$ the set of chords with length at least $k$ in $D_n$. In several steps we will show that, \whp, $\Len{k}$ is strongly connected as a subdiagram of $D_n$ as $k$ tends to infinity. We start with finding a large clique in $C_n$, which will act like a progenitor of the `giant' strong component in $G_{D_n}$.

\begin{definition}
A \textit{balanced clique} of size $m$ is a clique such that there is exactly one endpoint in each of the blocks $[1,r], [r+1,2r], \cdots, [(2m-1)r+1, 2mr]$, where $r= \lf n/m \rf$.
\end{definition}

Let $B_m$ denote the number of balanced cliques of size $m$ in $C_n$. We next show that $B_m$ is positive \whp\ for some large $m$.

\begin{lemma}\label{balanced clique}
Let $\eps$ be a positive constant. If $m\le \lf (2+\eps)^{-1}(\log n)^{-1/2}n^{1/2}\rf$, then there exists a balanced clique of size $m$ in $C_n$ \whp, i.e., $ \pr(B_m \ge 1)\to 1 $ as $n\to \infty$.
\end{lemma}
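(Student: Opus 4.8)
The plan is to bypass any direct moment computation for $B_m$ and instead reduce the existence of a balanced clique to a simple covering event about the $2m$ blocks. Write $B_i=[(i-1)r+1,ir]$ for the $i$-th block, $i\in[2m]$, where $r=\lf n/m\rf$. The key observation is this: if $e_1<e_2<\cdots<e_{2m}$ are endpoints with exactly one $e_i$ in each block $B_i$, then the \emph{unique} way to pair them into $m$ pairwise-crossing chords is $\ch{e_i,e_{i+m}}$, $i\in[m]$; that is, chord $i$ joins block $B_i$ to block $B_{i+m}$. Hence a balanced clique of size $m$ is nothing but a choice, for each $i\in[m]$, of one chord of $C_n$ joining $B_i$ to $B_{i+m}$ — and any such choice automatically produces vertex-disjoint, pairwise-crossing chords, since the blocks for distinct $i$ are disjoint and the endpoints are forced into the sorted pattern above. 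Consequently
\[
\{B_m\ge 1\}=\bigcap_{i=1}^{m}\{\text{some chord of }C_n\text{ joins }B_i\text{ to }B_{i+m}\},
\]
and by the union bound it suffices to show $m\cdot p\to 0$, where $p$ is the probability that no chord joins two fixed disjoint $r$-sets $A,B$.

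First I would estimate $p$ by a sequential exposure of the matching. Reveal partners by repeatedly taking the smallest still-unmatched point of $B$ and matching it to a uniformly random unmatched point, stopping once $B$ is exhausted; this builds a uniform random chord diagram. The event ``no $A$--$B$ chord'' holds exactly when every partner chosen in this process avoids $A$. Conditioned on survival through the first $\ell-1$ steps, no point of $A$ has yet been used, so all $r$ points of $A$ are still available; thus the $\ell$-th partner falls in $A$ with conditional probability $\tfrac{r}{2n-2\ell+1}\ge \tfrac{r}{2n}$. Since each step removes at most two points of $B$ (the chosen point and possibly its partner), the process lasts at least $\lf r/2\rf$ steps, and the chain rule over these always-present steps gives
\[
p\ \le\ \prod_{\ell=1}^{\lf r/2\rf}\Big(1-\frac{r}{2n-2\ell+1}\Big)\ \le\ \Big(1-\frac{r}{2n}\Big)^{\lf r/2\rf}\ \le\ \exp\!\Big(-\tfrac{r^2}{4n}\,(1-o(1))\Big).
\]

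It then remains to substitute $r=\lf n/m\rf$ and $m\le \lf(2+\eps)^{-1}(\log n)^{-1/2}n^{1/2}\rf$. Here $m^2\le (2+\eps)^{-2}n/\log n$, so $r^2/(4n)\ge \tfrac{n}{4m^2}(1-o(1))\ge \tfrac{(2+\eps)^2}{4}(\log n)(1-o(1))$, whence $p\le n^{-(2+\eps)^2/4+o(1)}$ and
\[
\pr(B_m=0)\ \le\ m\,p\ \le\ n^{\,1/2-(2+\eps)^2/4+o(1)}\to 0,
\]
because $(2+\eps)^2\ge 4>2$. This proves $\pr(B_m\ge 1)\to 1$.

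I expect the estimate of $p$ to be the crux. The point is that one genuinely needs the \emph{exponentially} small bound $p\le e^{-\Omega(r^2/n)}$: a second-moment estimate for $B_m$ itself is useless here, since $\mean[B_m^2]/\mean[B_m]^2\to\infty$ (the variance is dominated by pairs of balanced cliques sharing many chords), and a crude Chebyshev bound on the number of $A$--$B$ chords only yields the polynomial decay $p=O(m^2/n)$, which is far too weak for a union bound over $m\approx\sqrt n$ block-pairs. The sequential exposure, which keeps \emph{all} of $A$ available at every step and hence turns $p$ into an exact product of survival factors, is what delivers the required exponential bound; the factor $\tfrac14$ in the exponent (and thus the comfortable margin encoded in the constant $2+\eps$) comes precisely from the $\lf r/2\rf$ lower bound on the number of exposure steps.
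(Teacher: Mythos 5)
Your proposal is correct, and its skeleton coincides with the paper's: the paper also observes that $\{B_m=0\}\subseteq\bigcup_{t=1}^m A(t)$, where $A(t)$ is the event that no chord joins the block $I_t$ to $I_{t+m}$, and then applies the union bound, so everything reduces to showing $m\cdot\pr(A(t))\to 0$. Where you genuinely diverge is in the estimation of $p_t=\pr(A(t))$, which is the bulk of the paper's proof. The paper decomposes $A(t)$ according to the number $j$ of chords with both endpoints inside $I_t$, writes $p_t$ as an exact sum of $\lfloor r/2\rfloor+1$ combinatorial terms $f_j$, proves the sequence $\{f_j\}$ is unimodal by examining the ratio $f_j/f_{j+1}$, locates the maximal term near $j\sim(\log n)/2$, and bounds it via Stirling's formula and a separate optimization of $x\mapsto(er^2/4nx)^x$ — roughly a page of computation ending in $p_t=O\bigl(\sqrt{n\log n}\,e^{-r^2/(4n)}\bigr)$. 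Your sequential-exposure argument reaches the same exponent $e^{-r^2(1-o(1))/(4n)}$ in a few lines: the decisive point, which you correctly identify and which makes the chain rule exact rather than heuristic, is that conditional on survival all $r$ points of $A$ are still unmatched (they can only be consumed as partners of $B$-points in your revelation order), so every conditional survival factor is exactly $1-r/(2n-2\ell+1)$ independently of the history, and the deterministic lower bound $\lfloor r/2\rfloor$ on the number of steps finishes the job. Your final bound $n^{1/2-(2+\eps)^2/4+o(1)}$ is in fact marginally sharper than the paper's $O\bigl(n^{1-(2+\eps)^2/4}\bigr)$, since you avoid paying the factor (number of terms)$\times$(maximal term); both are amply negative in the exponent. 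Your closing remark that a second-moment argument on $B_m$ itself would fail is accurate but not needed for the proof. In short: same reduction, but a cleaner and more robust derivation of the key exponential estimate.
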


\begin{proof}
Let $r= \lf n/m \rf$, and let $I_t$ denote the block $[(t-1)r+1,tr]$. We have $n/m\ge (2+\eps)\sqrt{n\log n}$, so $r\ge (2+\eps)\sqrt{n\log n}-1$. For $1\le t\le m$, let $A(t)$ be the event that there is no chord $\ch{x,y}$ in $C_n$ such that $x\in I_t$ and $y\in I_{t+m}$. The event $\{B_m=0\}$ lies in the union $\cup_{t=1}^{m}A(t)$. Let $p_t:=\pr(A(t))$. By the union bound,
\[          
\pr(B_m=0) \le \sum_{t=1}^m p_t\,,		
\]
and by symmetry all the $p_t$'s are the same. Thus, $\pr(B_m=0)\le mp_t$. It is enough to show $p_t=o(1/m)$. For $0\le j\le r/2$, we denote by $B(t,j)$  the event that there is no chord from $I_t$ to $I_{t+m}$, and there are exactly $j$ chords with both endpoints in $I_t$. Note that $B(t,j)$'s are disjoint and $A(t)=\cup_{j=0}^{\lf r/2\rf}B(t,j)$. Thus,
\begin{align*}	
p_t &= \sum_{j=0}^{\lf r/2\rf}\pr(B(t,j))	\\
&=\sum_{j=0}^{\lf r/2\rf} {r \choose 2j}(2j-1)!!{2n-2r \choose r-2j}(r-2j)!\frac{(2n-2r+2j-1)!!}{(2n-1)!!}.	
\end{align*}
Let $f_j$ be the term in the above sum corresponding to index $j$. For $j<\lf r/2\rf$ we have
\be \label{ratio of f's}	\frac{f_j}{f_{j+1}}	= \frac{(2j+2)(2n-3r+2j+2)}{(r-2j)(r-2j-1)}\cdot \lp 1-\frac{r}{2n-2r+2j+1}\rp.		\ee
It is easy to see that this ratio is increasing with $j$. Thus, the sequence $\{f_j\}_{j=0}^{\lf r/2\rf}$ is unimodal; there is an index $k$ such that $\{f_j\}$ is increasing for $j\le k$ and it is decreasing afterward. To find this $k$, we need to solve the equation $f_j/f_{j+1}=1$. It is easy to see that the index $k$ of the maximum term goes to infinity and $k=o(r)$.
For $1 \ll j\ll r$ (here $a\ll b$ means $a/b\to 0$), using~\eqref{ratio of f's} we write
\[ 	
\frac{f_j}{f_{j+1}}	\sim \frac{4nj}{r^2}\sim \frac{4j}{(2+\eps)^2 \log n}.	
\]
From this we conclude that the maximum term $f_k$ occurs when $k$ is asymptotic to $(\log n)/2$. Let us find an upper bound for $f_k$. We have
\begin{align}\label{eq:fk}
f_k 	&=	{r \choose 2k}(2k-1)!!{2n-2r \choose r-2k}(r-2k)!\frac{(2n-2r+2k-1)!!}{(2n-1)!!}	\notag 	\\
	&=	\frac{r!}{(r-2k!)}\cdot \frac{1}{2^kk!}\cdot \frac{(2n-2r)!!}{(2n-3r+2k)!!}\cdot \frac{(2n-2r+2k-1)!!}{(2n-1)!!} \notag	\\
	&=	\lp \frac{1}{2^kk!}\rp \cdot \lp \prod_{i=0}^{2k-1}(r-i) \rp \cdot \lp \prod_{i=0}^{r-2k-1}(2n-2r-i) \rp \cdot \lp \prod_{i=0}^{r-k-1} \frac{1}{2n-1-2i}\rp.
\end{align}
Now we find an upper bound for each of the four terms in the product above. Using Stirling's formula, we obtain
\[	 \frac{1}{2^kk!} \le \lp \frac{e}{2k}\rp ^k.		\]
For the second term in~\eqref{eq:fk}, we have
\[	\prod_{i=0}^{2k-1}(r-i)\le r^{2k}.		\]
For the third term in~\eqref{eq:fk}, we have
\begin{align*}
\prod_{i=0}^{r-2k-1}(2n-2r-i)	&=	(2n)^{r-2k} \prod_{i=0}^{r-2k-1}\lp 1-\frac{2r+i}{2n}\rp 	\\
							&=	(2n)^{r-2k} \exp\lb \sum_{i=0}^{r-2k-1}\log \lp 1-\frac{2r+i}{2n}\rp\rb 		\\
							&=	(2n)^{r-2k} \exp \lp -\frac{5r^2}{4n}+O\lp \frac{rk}{n}+\frac{r^3}{n^2}\rp\rp.
\end{align*}
Finally, for the last term in~\eqref{eq:fk}, we have
\begin{align*}
\prod_{i=0}^{r-k-1} \frac{1}{2n-1-2i}		&=		(2n)^{-r+k} \prod_{i=0}^{r-k-1}\lp 1+\frac{2i}{2n-1-2i}\rp 		\\
									&\le 	(2n)^{-r+k} \prod_{i=0}^{r-k-1}\lp 1+\frac{3i}{2n}\rp 			\\
									&= 	 	(2n)^{-r+k} \exp\lp \frac{3r^2}{4n} + O\lp \frac{rk}{n}+\frac{r^3}{n^2}\rp\rp.
\end{align*}
Combining \eqref{eq:fk} and the four upper bounds for the terms on its right side, we get
\[ f_k\le \lp \frac{er^2}{4nk}\rp ^k e^{-r^2/2n}\exp\lp O\lp \frac{rk}{n}+\frac{r^3}{n^2}\rp\rp\le  2\lp \frac{er^2}{4nk}\rp ^k e^{-r^2/2n}.\]
Now letting $g(x)=x \log \lp \frac{er^2}{4nx}\rp$ and taking the derivative, we get
\[ 	 g'(x)= \frac{er^2}{4nx}-1.		\]
Thus, $g(x)$ takes its maximum when $x=r^2/(4n)$, and consequently, the maximum of $\lp \frac{er^2}{4nx}\rp ^x$ occurs for $x=r^2/(4n)$. So,
\[	 \lp \frac{er^2}{4nk}\rp ^k e^{-r^2/2n} \le e^{r^2/4n} e^{-r^2/2n}= e^{-r^2/4n}.	\]
Since $p_t= \sum_{j=0}^{\lf r/2\rf }f_j \le \lb 1+(r/2)\rb f_k$, we have 
\[ p_t= O\lp \sqrt{n \log n}e^{-r^2/(4n)}\rp	\]
and consequently,
\[		 \pr(B_m=0)		\le	 m\cdot p_t = O\lp ne^{-r^2/(4n)}\rp=  O\lp n^{1-\frac{(2+\eps)^2}{4}}\rp, 	\]
which finishes the proof.
\end{proof}

Let $T_n$ denote the tournament chosen uniformly at random from the set of all tournaments on $n$ vertices. In other words, $T_n$ is the directed graph obtained from the complete (undirected) graph $K_n$ by orienting the edges independently of each other with probability $1/2$ in each direction. It is well known that $T_n$ is strongly connected \whp. In fact, a result of
 Janson~\cite[Theorem 1]{Jan95} states that there are many directed Hamilton cycles in $T_n$ \whp.
In the next lemma we use this fact and Lemma~\ref{balanced clique} to show that $\Len{m}$ is strongly connected for sufficiently large $m$ \whp.
Subsequently, in several steps, we will show that the same event holds \whp\ for any $m\to \infty$.
Let $A(k,n)$ denote the  event that there is a strongly connected balanced clique of size $k$ in $D_n$.

\begin{lemma}[Step 1]\label{k_1}
\Whp, the subdiagram $\Len{\lf n^{3/5}\rf}$ is strongly connected as $n\to \infty$.
\end{lemma}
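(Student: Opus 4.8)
I need to show that the subdiagram $\Len{\lf n^{3/5}\rf}$ in the random oriented chord diagram $D_n$ is strongly connected whp. The plan is to combine the two tools that have just been assembled: Lemma~\ref{balanced clique}, which produces a balanced clique of size roughly $n^{1/2}/\sqrt{\log n}$, and the Janson result on Hamilton cycles in the random tournament $T_n$. The underlying chords of a balanced clique pairwise cross, so once we orient them we obtain a uniformly random tournament on the clique vertices; by the tournament result this oriented clique is strongly connected whp. So the first step is to fix $m$ just below the threshold in Lemma~\ref{balanced clique}, say $m=\lf (2+\eps)^{-1}(\log n)^{-1/2}n^{1/2}\rf$, and record that whp $C_n$ contains a balanced clique $K$ of size $m$ whose induced orientation in $D_n$ is strongly connected — this is the event $A(m,n)$, and a short argument shows $\pr(A(m,n))\to 1$ by conditioning on the existence of the clique and applying strong connectivity of $T_m$.

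**Absorbing the long chords into the clique.**
With a strongly connected core $K$ in hand, the main work is to show that every chord $c\in\Len{\lf n^{3/5}\rf}$ is both reachable from $K$ and can reach $K$ in the directed graph $G_{D_n}$. Because the clique is balanced, its endpoints are spread evenly around the circle into $2m$ blocks of width $r=\lf n/m\rf\approx \sqrt{n\log n}$. A chord $c$ of length at least $n^{3/5}$ separates the circle into two arcs each of length at least $n^{3/5}$, and since $n^{3/5}\gg r$ each such arc must contain many entire clique blocks; hence $c$ crosses a large number — in fact $\Theta(n^{3/5}/r)$, which tends to infinity — of clique chords. The point is that $c$ has so many crossings with $K$ that, after random orientation, whp at least one of these crossings points from $K$ into $c$ and at least one points from $c$ into $K$: each crossing is oriented independently by a fair coin, so the probability that all $\Theta(n^{3/5}/r)$ crossings agree in direction is exponentially small. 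Taking a union bound over the at most $2n$ chords of $\Len{\lf n^{3/5}\rf}$ still gives $o(1)$ provided the number of crossings with $K$ grows faster than $\log n$, which it does since $n^{3/5}/\sqrt{n\log n}=n^{1/10}/\sqrt{\log n}\to\infty$. Thus whp every long chord both sends an edge to $K$ and receives an edge from $K$, so $K\cup\Len{\lf n^{3/5}\rf}$ — and in particular $\Len{\lf n^{3/5}\rf}$ itself, which contains $K$ — is strongly connected.

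**The main obstacle.**
The delicate point is the deterministic geometric claim that every chord of length $\ge n^{3/5}$ crosses enough of the balanced clique, uniformly over all such chords and over the random position of the clique. I would establish this by a counting argument: on each side of $c$ there are at least $n^{3/5}$ endpoints, so at least $\lf n^{3/5}/r\rf - O(1)$ complete clique blocks lie strictly inside each arc; a clique chord with exactly one endpoint in a block interior to one arc and its partner in the other arc is forced to cross $c$, and balancedness guarantees one endpoint per block. Quantifying the lower bound on crossings carefully — getting a count that is $\omega(\log n)$ after accounting for the $O(1)$ boundary blocks near $x$ and $y$ — is where the real care is needed, but the slack between $n^{3/5}$ and $\sqrt{n\log n}$ is comfortable. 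Once this lower bound on crossings is secured, the orientation step is a routine Chernoff-type union bound and the strong connectivity conclusion follows immediately, completing Step~1.
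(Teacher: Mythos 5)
Your proposal is correct and follows essentially the same route as the paper: fix a balanced clique from Lemma~\ref{balanced clique}, use the random-tournament fact to make it strongly connected whp, observe that every chord of length at least $n^{3/5}$ crosses $\omega(\log n)$ clique chords because the clique's blocks are evenly spread, and finish with a union bound on the event that some long chord over-crosses or under-crosses the whole clique. The only (immaterial) difference is the choice of clique size --- the paper takes $m'=\lf n^{9/20}\rf$, giving about $n^{1/20}$ crossings per long chord, while you take the maximal $m\approx n^{1/2}/(2\sqrt{\log n})$ allowed by Lemma~\ref{balanced clique}; both comfortably exceed $\log n$.
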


\begin{proof}
Let $m=\lf n^{3/5}\rf$ and $m'= \lf n^{9/20}\rf$. By Lemma~\ref{balanced clique} and the fact that a random tournament is strongly connected \whp, the event $A(m',n)$ holds \whp.
Let $c$ be a chord in $D_n$ whose length is at least $m$. 
Since $\ell(c)\ge m$, each of the two arcs determined by $c$ contains at least $(m-2r)/r$ of the blocks $I_1,\dots,I_{2m'}$, where $r=\lf n/m' \rf$ and $I_t=[(t-1)r+1,tr]$. 
As a result, $c$ intersects at least, say, $m/r-2$ chords of any balanced clique of size $m'$. 
If $B$ is a stongly connected balanced clique of size $m'$ in $D_n$, then $c$ belongs to the same strong component containing $B$ unless either $c$ over-crosses $B$ or it under-crosses $B$. Thus, the probability of $c$ not belonging to the strong component of $B$ is at most $2(1/2)^{m/r-2}$. 
Consequently, the probability that there is a chord of length at least $m$ that does not belong to the strong component containing $B$ is $O\big( n\cdot 2^{-m/r}\big)$, which goes to zero since $m/r \sim n^{1/20}$.
 In other words, conditioned on the event $A(m',n)$, which occurs \whp, all the chords of length at least $m$ lie in the same strong component \whp. This finishes the proof.
\end{proof}

Lemma~\ref{k_1} tells us that the subdiagram consisting of chords of length at least $n^{3/5}$ is strongly connected \whp.
By Lemma \ref{small length chords}, \whp, the number of chords with length at most $n^{\frac{1}{2}+\eps}$ is smaller than $(1+\eps)n^{\frac{1}{2}+\eps}$ for any fixed $\eps>0$. These two results together imply that, \whp,  there is a strong component, which deserves to be called the \emph{giant component}, of size $n-o(n)$. Next, using the previous result, we will show that chords of length at least $n^{1/3}$ belong to this giant component. We start with the following lemma.

\begin{lemma}\label{intervals-chords}
As $n\to \infty$, \whp, there is no block of length $3\lf n^{3/5}\rf $ that contains at least $\lf n^{1/3}\rf$ chords. 
\end{lemma}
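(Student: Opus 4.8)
The plan is a direct first-moment (union bound) argument. Write $L:=3\lf n^{3/5}\rf$ for the block length and $k:=\lf n^{1/3}\rf$ for the threshold, and for a fixed block $B$ of length $L$ let $N(B)$ be the number of chords of $C_n$ with both endpoints in $B$. Since the event $\{N(B)\ge k\}$ is contained in $\big\{\binom{N(B)}{k}\ge 1\big\}$, Markov's inequality gives $\pr(N(B)\ge k)\le \mean\big[\binom{N(B)}{k}\big]$, where the latter expectation is exactly the expected number of $k$-element sets of chords whose $2k$ endpoints all lie in $B$. I would evaluate it by the same counting used in the proofs of Theorem~\ref{FN} and Lemma~\ref{Y_j}: choose $2k$ of the $L$ endpoints of $B$, pair them into $k$ chords in $(2k-1)!!$ ways, and pair the remaining $2n-2k$ endpoints arbitrarily, so that
\[
\mean\left[\binom{N(B)}{k}\right]=\binom{L}{2k}(2k-1)!!\,\frac{(2n-2k-1)!!}{(2n-1)!!}.
\]
Since there are $2n$ blocks of length $L$ on the circle, a union bound over all of them yields
\[
\pr\big(\exists\, B:\ N(B)\ge k\big)\le 2n\binom{L}{2k}(2k-1)!!\,\frac{(2n-2k-1)!!}{(2n-1)!!},
\]
and it remains to show that the right-hand side tends to $0$.

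The estimation is routine. Using $\binom{L}{2k}(2k-1)!!\le L^{2k}/(2^k k!)$, the bound $(2n-2k-1)!!/(2n-1)!!=\prod_{i=0}^{k-1}(2n-1-2i)^{-1}\le (2n-2k)^{-k}$, and Stirling in the form $k!\ge (k/e)^k$, I would collapse the per-block expectation to
\[
\binom{L}{2k}(2k-1)!!\,\frac{(2n-2k-1)!!}{(2n-1)!!}\le \lp \frac{e L^2}{2k(2n-2k)}\rp^{k}.
\]
Substituting $L\sim 3n^{3/5}$, $k\sim n^{1/3}$ and $2n-2k\sim 2n$, the base behaves like $\tfrac{9e}{4}\,n^{6/5-1/3-1}=\tfrac{9e}{4}\,n^{-2/15}\to 0$; in particular it is at most $n^{-1/15}$ for all large $n$. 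Hence the per-block bound is at most $n^{-k/15}=n^{-n^{1/3}/15}$, which decays faster than any power of $n$ and therefore absorbs the factor $2n$ from the union bound, proving the claim.

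I expect no serious obstacle here, since this is a clean first-moment bound; the only point requiring care is the exponent bookkeeping. The decisive inequality is the comparison $6/5<1/3+1$ (equivalently $L^2=o\big(k\,n\big)$), which is precisely what forces the base of the exponential below $1$ and makes the whole union bound vanish. This is exactly why the specific exponents $3/5$ and $1/3$ appear in the statement, and I would make sure to flag that the argument uses only $L^2=o(kn)$ together with $k\to\infty$, so that the superpolynomial decay always dominates the $2n$ factor.
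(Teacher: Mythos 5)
Your proof is correct and follows essentially the same route as the paper: a union bound over the $2n$ blocks combined with the first-moment bound $\pr(N(B)\ge k)\le \binom{L}{2k}(2k-1)!!\,(2n-2k-1)!!/(2n-1)!!$, followed by Stirling to show the base of the resulting exponential is $O(n^{-2/15})$. The only cosmetic difference is that you track the factor $2n$ explicitly to the end, whereas the paper absorbs it into the superpolynomially small per-block bound.
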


\begin{proof}
Let $t:=3\lf n^{3/5}\rf$ and $s:=\lf n^{1/3}\rf$. Let $I_j$ denote the set $\{j,j+1,\dots,j+t-1\}$ of size $t$, and $p(I_j)$ denote the probability that $I_j$ contains at least $s$ chords. By symmetry, $p(I_i)=p(I_j)$ for any $i$ and $j$ and hence the probability that there is a block of length $t$ with at least $s$ chords is bounded above by $2np(I_1)$ by the union bound. We have
\begin{align*}
2np(I_1) &\le 2n{t \choose 2s}\frac{(2s-1)!!(2n-2s-1)!!}{(2n-1)!!} \\
&\le 2n\, \frac{t^{2s}}{(2s)!}\, \frac{(2s-1)!!}{n^s}=	2n\,\frac{t^{2s}}{s!2^sn^s}		\\
&\le \lp \frac{t^2e}{2ns}\rp^s 	\le 	\lp 18n^{-2/15}\rp^{n^{1/3}} \to 0,
\end{align*}
where the third inequality follows from Stirling's formula, and the fourth one is obtained by plugging in the values of $s$ and $t$. This concludes the proof.
\end{proof}

\begin{lemma}[Step 2]\label{k_2}
As $n\to \infty$, \whp, the subdiagram $\Len{3\lf n^{1/3}\rf}$ is strongly connected.
\end{lemma}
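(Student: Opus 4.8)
The plan is to bootstrap from Step~1. By Lemma~\ref{k_1} we already know that, \whp, the set $S:=\Len{\lf n^{3/5}\rf}$ is strongly connected, and since $\lf n^{3/5}\rf > 3\lf n^{1/3}\rf$ we have $S\subseteq \Len{3\lf n^{1/3}\rf}$. So it will suffice to show that, \whp, every chord $c$ with $3\lf n^{1/3}\rf \le \ell(c) < \lf n^{3/5}\rf$ lies in the strong component of $S$. Because $S$ is strongly connected, this occurs as soon as $c$ has at least one out-neighbor and at least one in-neighbor among the chords of $S$ that it crosses; equivalently, the only bad events are that $c$ over-crosses all of its $S$-crossings, or under-crosses all of them.

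The combinatorial heart is to show that, \whp\ in $C_n$ and uniformly over all such $c$, the chord $c$ crosses more than $\lf n^{1/3}\rf$ chords of $S$. Fix $c=\ch{i,i+\ell+1}$ with $3\lf n^{1/3}\rf \le \ell < \lf n^{3/5}\rf$, and let $B^+$ be the block obtained by extending the shorter arc of $c$ (which has $\ell$ endpoints) by $n^{3/5}$ endpoints on each side, so $|B^+| = \ell + 2n^{3/5} < 3n^{3/5}$. I would argue that every chord $d\ne c$ that either lies entirely inside the shorter arc of $c$, or crosses $c$ with $\ell(d)<n^{3/5}$, has both endpoints in $B^+$. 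The first type is immediate; for the second, since $d$ crosses $c$ its shorter arc contains exactly one endpoint of $c$, and if that arc has fewer than $n^{3/5}$ endpoints then the far endpoint of $d$ must lie within $n^{3/5}$ of the shorter arc of $c$, hence inside $B^+$. By Lemma~\ref{intervals-chords}, \whp\ the block $B^+$ (being of length below $3\lf n^{3/5}\rf$) contains fewer than $\lf n^{1/3}\rf$ chords. Writing $X$ for the number of chords inside the shorter arc and $Y$ for the number of length-$(<n^{3/5})$ chords crossing $c$, these are distinct chords all lying in $B^+$, so $X+Y < \lf n^{1/3}\rf$. Since the shorter arc has $\ell$ endpoints we have $deg(c)=\ell-2X$; subtracting the $Y$ short crossings leaves at least $\ell - 2X - Y > \ell - 2\lf n^{1/3}\rf \ge \lf n^{1/3}\rf$ crossings with chords of length $\ge \lf n^{3/5}\rf$, all of which belong to $S$.

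With the uniform lower bound in hand, the remaining step is the random-orientation argument, run exactly as in Lemma~\ref{k_1}. Conditioning on $C_n$ lying in the \whp\ event above, each relevant chord $c$ crosses more than $\lf n^{1/3}\rf$ chords of $S$, and the orientations of these crossings are independent fair coins, so the probability that $c$ over-crosses all of them or under-crosses all of them is at most $2\cdot 2^{-\lf n^{1/3}\rf}$. A union bound over the at most $2n$ admissible chords gives total failure probability $O\big(n\,2^{-n^{1/3}}\big)\to 0$. Combining this with the \whp\ events that $S$ is strongly connected (Lemma~\ref{k_1}) and that no block of length $3\lf n^{3/5}\rf$ contains $\lf n^{1/3}\rf$ chords (Lemma~\ref{intervals-chords}) then yields that, \whp, every chord of $\Len{3\lf n^{1/3}\rf}$ lies in the strong component of $S$, so $\Len{3\lf n^{1/3}\rf}$ is strongly connected.

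I expect the main obstacle to be the counting in the middle paragraph rather than the orientation step. The subtlety is that when $\ell$ is near $3n^{1/3}$ the degree $\ell-2X$ is itself only of order $n^{1/3}$, so one cannot afford to lose more than a constant fraction of the crossings to short chords. The saving observation is that all the chords failing to be ``long'' (the internal chords and the short crossings) are trapped in a single block of length below $3n^{3/5}$ and are therefore bounded uniformly by Lemma~\ref{intervals-chords}; the delicate part will be checking that the threshold $3\lf n^{1/3}\rf$ and the bound $\lf n^{1/3}\rf$ are matched so that $\ell - 2X - Y$ stays above $\lf n^{1/3}\rf$ (in particular well above $\log n$) for every admissible length simultaneously. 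Once that uniform estimate is secured, the orientation and union-bound argument is routine.
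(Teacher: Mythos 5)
Your proposal is correct and follows essentially the same route as the paper: intersect the \whp\ events of Lemma~\ref{k_1} and Lemma~\ref{intervals-chords} to conclude that every chord $c$ with $3\lf n^{1/3}\rf\le\ell(c)<\lf n^{3/5}\rf$ has at least $\lf n^{1/3}\rf$ neighbors in $\Len{\lf n^{3/5}\rf}$, then apply the $2\cdot 2^{-\lf n^{1/3}\rf}$ coin-flip bound and a union bound over all chords. Your middle paragraph merely spells out in more detail the geometric fact (that all internal chords and short crossings of $c$ are trapped in a single block of length below $3\lf n^{3/5}\rf$) that the paper's proof uses implicitly.
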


\begin{proof}
Let $k= 3\lf n^{1/3}\rf$ and $m=\lf n^{3/5}\rf$. Let $E$ be the intersection of the events that the subdiagram $\Len{m}$ is strongly connected and there is no interval of length $3m$ that contains at least $k/3$ chords. By Lemma~\ref{k_1} and Lemma~\ref{intervals-chords}, the event $E$ holds \whp.

Conditioned on $E$, there is no chord $c$ in $D_n$ with $k \le \ell(c) \le m$ such that $c$ has fewer than $k/3$ neighbors in  $\Len{m}$. On the other hand, if $c$ is a chord with at least $k/3$ neighbors from $\Len{m}$, then the probability that $c$ over-crosses (or under-crosses) $\Len{m}$ is at most $2^{-k/3}$. Therefore, conditioned on $E$, the probability that there is a chord $c$ with $k \le \ell(c) \le m$ that does not belong to the giant component is $O\lp n\cdot 2^{-k/3}\rp$, which goes to 0. Consequently, since $E$ holds \whp, $\Len{k}$ is strongly connected \whp.
\end{proof}

\begin{lemma}[Step 3]\label{k_3}
As $n \to \infty$, the subdiagram $\Len{ 2\log n}$ is strongly connected \whp.
\end{lemma}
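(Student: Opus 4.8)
The plan is to bootstrap from Lemma~\ref{k_2} in exactly the manner of Steps~1 and~2, lowering the length threshold from $m':=3\lf n^{1/3}\rf$ down to $2\log n$. By Lemma~\ref{k_2} the subdiagram $\Len{m'}$ is strongly connected \whp; call its strong component the \emph{giant}. Every chord of length at least $m'$ already lies in the giant, so it suffices to show that, \whp, every chord $c$ with $2\log n \le \ell(c) < m'$ joins the giant's strong component. The mechanism is the one used before: if $c$ has $d$ neighbors inside $\Len{m'}$ and $c$ neither over-crosses nor under-crosses $\Len{m'}$, then some neighbor is oriented out of $c$ and some other neighbor is oriented into $c$; since $\Len{m'}$ is strongly connected this closes a directed cycle through $c$, placing $c$ in the giant. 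Over the random orientation of $D_n$, the chance that $c$ over-crosses or under-crosses all $d$ of its neighbors is at most $2^{1-d}$. Thus the problem reduces to showing (i) that $d$ is large --- essentially as large as $\ell(c)$ --- and (ii) that the resulting union bound over all such chords tends to $0$.

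First I would record the degree bound through a new interval estimate analogous to Lemma~\ref{intervals-chords}, but now in the regime of block length $O(n^{1/3})$ and a \emph{constant} chord budget. A first-moment computation identical to that lemma shows that, \whp, no block of length $3m'+2$ contains as many as $g$ chords, for any fixed $g\ge 4$: the expected number of offending blocks is $2n\cdot\tfrac{1}{g!}\big(\tfrac{t^2}{4n}\big)^{g}(1+o(1))$ with $t=3m'+2=O(n^{1/3})$, which is $O\big(n^{1-g/3}\big)\to 0$. Granting this event, fix a chord $c=\ch{x,y}$ with $2\log n \le \ell(c)=\ell < m'$ and set $B=[x-m',\,y+m']$, a block of length at most $3m'+2$. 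Every chord nested inside the short arc of $c$ and every crossing chord of length less than $m'$ has both endpoints in $B$: a crossing chord's shorter arc contains exactly one of $x,y$, so if it has fewer than $m'$ endpoints its far endpoint lies within $m'$ of $x$ or of $y$, while its other endpoint already lies in $[x+1,y-1]$. Hence the number $j$ of nested chords plus the number $s$ of short crossing chords is smaller than $g$. Since $c$ has exactly $\ell-2j$ crossing chords in total, its number of neighbors in $\Len{m'}$ is $(\ell-2j)-s\ge \ell-2(j+s)>\ell-2g$.

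With the degree bound in hand I would finish with a first-moment argument over the orientation. Condition on the event $E'$ that $\Len{m'}$ is strongly connected and the interval estimate holds; $E'$ concerns only the undirected diagram and occurs \whp. On $E'$ every chord $c$ with $2\log n\le \ell(c)<m'$ has $d(c)>\ell(c)-2g\ge 2\log n-2g$ neighbors in $\Len{m'}$, so it fails to join the giant with orientation-probability at most $2^{1-d(c)}\le 2^{1+2g}\,2^{-2\log n}$. There are at most $n$ such chords, so the conditional expected number failing to join the giant is at most $2^{1+2g}\,n\,2^{-2\log n}=2^{1+2g}\,n^{\,1-2\log 2}\to 0$, since $2\log 2>1$. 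By Markov's inequality together with $\pr(E')\to 1$, \whp\ no such chord fails, so all chords of length at least $2\log n$ share one strong component; that is, $\Len{2\log n}$ is strongly connected.

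The only delicate point --- and the reason the threshold is $2\log n$ rather than, say, $\log n$ --- is the interplay between the neighbor count and the orientation factor $2^{-d}$. The union bound needs $d$ to exceed $\log_2 n=\log n/\log 2\approx 1.44\log n$, whereas a chord of length $\ell$ supplies only about $\ell$ neighbors (less the $O(1)$ loss to the interval budget). Taking $\ell\ge 2\log n$ guarantees the exponent $2\log 2\approx 1.39>1$ and hence the polynomial decay above; any constant factor below $1/\log 2$ would make the expected number of unattached chords diverge. Everything else is routine: the interval estimate is super-polynomially safe (a constant budget $g$ already suffices), and the geometry confining the nested and short crossing chords to a single block of length $O(n^{1/3})$ is elementary.
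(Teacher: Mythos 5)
Your proof is correct and follows essentially the same route as the paper's: bootstrap from Lemma~\ref{k_2}, use a first-moment interval estimate to show no block of length $O(n^{1/3})$ contains more than a constant number of chords (the paper takes your $g$ to be $4$), deduce that every chord with $2\log n\le\ell(c)<3\lf n^{1/3}\rf$ has at least $\ell(c)-O(1)$ neighbors in $\Len{3\lf n^{1/3}\rf}$, and finish with a union bound of $O\lp n\cdot 2^{-2\log n}\rp=o(1)$ over the orientations. Your write-up is somewhat more explicit about the geometry confining nested and short crossing chords to the block and about why the constant $2$ in the threshold $2\log n$ suffices, but these are presentational rather than substantive differences.
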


\begin{proof}
Let $k=3\lf n^{1/3}\rf$. By Lemma \ref{k_2}, the subdiagram $\Len{k}$ is strongly connected \whp. Thus, it is enough to show that, \whp, there is no chord $c$ with $2\log n \le \ell(c) \le k$ over-crossing (or under-crossing)~$\Len{k}$. 

First we show that there is no block in $[2n]$ of length $3k$ that contains at least $4$ chords. Indeed, the expected value of the tuples $(I,c_1,c_2,c_3,c_4)$, where $I$ is a block of length $3k$ that contains the chords $c_1,c_2,c_3$, and $c_4$  is 
\be	\label{eq: 4 chords}  
 (2n)\,\frac{{3k  \choose 8}7!!}{(2n-1)(2n-3)(2n-5)(2n-7)}= O\lp n^{-1/3}\rp.		
 \ee
Let $E'$ be the event that every chord $c$ with $2\log n \le \ell(c) \le k$ has at least $\ell(c)-4$ neighbors from  $\Len{k}$. 
By~\eqref{eq: 4 chords}, the event $E'$ holds \whp.
On the other hand, conditioned on $E'$, the probability that there is a chord $c$ with $2\log n \le \ell(c) \le k$ that 
over-crosses (or under-crosses) $\Len{k}$  is $O(n2^{-2\log n})=O(1/n)$, which finishes the proof. 
\end{proof}

\begin{lemma}[Step 4]\label{only one complex}
\Whp, there is no block of length $\lf 5\log n \rf$ that contains two chords. Consequently, \whp, there is only one strong component of size larger than 1.
\end{lemma}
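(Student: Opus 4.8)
The plan is to prove the two assertions in turn, deriving the claim about strong components from the block estimate together with the earlier Steps.

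For the first assertion I would run a first-moment (Markov) argument, exactly in the spirit of the proofs of Theorem~\ref{FN} and Lemma~\ref{intervals-chords}. Write $\ell=\lf 5\log n\rf$ and let $W$ be the number of pairs $(I,\{c_1,c_2\})$ in which $I$ is a block of length $\ell$ and $c_1,c_2$ are two distinct chords of $C_n$ with all of their endpoints lying in $I$. There are $2n$ choices for $I$; having fixed $I$, there are $\binom{\ell}{4}$ ways to choose the four endpoints and $3=3!!$ ways to pair them into two chords, and any fixed such pair appears in $C_n$ with probability $(2n-5)!!/(2n-1)!!=1/[(2n-1)(2n-3)]$. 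Hence
\[
 \mean[W]=2n\binom{\ell}{4}\,3\,\frac{(2n-5)!!}{(2n-1)!!}=O\!\left(\frac{(\log n)^4}{n}\right)\to 0,
\]
so that $\pr(W\ge 1)\le \mean[W]\to 0$, which proves the first assertion.

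For the consequence I would argue by contradiction, using Step~3. By Lemma~\ref{k_3} the subdiagram $\Len{2\log n}$ is strongly connected \whp, so \whp\ all chords of length at least $2\log n$ lie in a single strong component $S^\ast$ (the giant), whose size is $n-o(n)\ge 2$. Suppose there were a second strong component $S$ of size at least $2$. Since every long chord is already in $S^\ast$, each chord of $S$ has length less than $2\log n$; and because $S$ is strongly connected of size $\ge 2$, its underlying undirected intersection graph is connected, so $S$ contains two chords $c=\ch{a,b}$ and $c'=\ch{u,v}$ that cross. The crux is then a short geometric observation: as $c$ and $c'$ cross, one endpoint of $c'$, say $u$, lies in the shorter arc of $c$, which holds fewer than $2\log n$ endpoints; and since $\ell(c')<2\log n$, the endpoint $v$ is within circular distance $2\log n$ of $u$. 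A brief case analysis, according to which side of $c$ contains $v$ and whether the short arc of $c'$ wraps past the label $2n$, shows that all four endpoints $a,b,u,v$ lie in one block of length at most $4\log n\le\lf 5\log n\rf$ for large $n$. By the first assertion, \whp\ no block of length $\lf 5\log n\rf$ contains two chords, contradicting the existence of $S$. Thus \whp\ there is exactly one strong component of size larger than $1$.

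The first-moment bound is routine; the only point needing care is the localization claim confining the four endpoints of the crossing pair to a block of length at most $4\log n$, and in particular the wrap-around case in which the short arc of $c'$ crosses the endpoint labelled $2n$. I expect this modest geometric step to be the main obstacle, since it is precisely what links the combinatorial block estimate to the statement about strong components.
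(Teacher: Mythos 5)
Your proposal is correct and follows essentially the same route as the paper: the identical first-moment bound $2n\binom{\ell}{4}\cdot 3!!\,/[(2n-1)(2n-3)]=O\big((\log n)^4/n\big)$ for the block statement, followed by the observation that two crossing chords each of length less than $2\log n$ have all four endpoints confined to a block of length at most $4\log n+O(1)\le \lf 5\log n\rf$, so that Step~3 rules out any second nontrivial strong component. The geometric localization you single out as the delicate point is exactly what the paper leaves implicit, and it goes through cleanly (the short arc of $c$ together with its two endpoints and the short arc of $c'$ together with its two endpoints are two overlapping blocks, each of length at most $2\log n+1$, whose union is a block of length at most $4\log n+1$), with no special treatment needed for the wrap-around case.
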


\begin{proof}
Let $k=\lf 5\log n \rf$ and $X_n$ be the number of blocks of length $k$ containing two chords. Given a block $I$ of length $k$, the probability that $I$ contains two chords is at most
\[
\frac{{k \choose 4}3!!}{(2n-1)(2n-3)}= O\lp \frac{k^4}{n^2}\rp.
\]
Since there are $2n$ blocks of length $k$, the probability that there is one containing two chords is $O(k^4/n)$ by the union bound, which goes to 0. In particular, \whp, no two chords of length at most $2\log n$ cross each other.
Combining this with Lemma \ref{k_3}, \whp, there is only one strong component of size larger than~1.
\end{proof}

The last step is to find the number of strong components of size $1$. Let $F$ be the event that all the chords of length at least $2\log n$ belong to the same strong component and there is no block of size $5\log n$ containing two chords of length smaller than $2\log n$. By lemmas~\ref{k_3} and \ref{only one complex}, the event $F$ occurs \whp. 

Now, let $c$ be a chord of length $k< 2\log n$. Conditioned on the event $F$, the chord $c$ has $k$ neighbors, all from $\Len{2\log n}$. In that case, $c$ does not belong to the giant component if and only if either it over-crosses all its neighbors or it under-crosses all its neighbors. The probability that it over-crosses all $k$ of its neighbors  is $2^{-k}$, and likewise the probability that it under-crosses all $k$ of its neighbors  is $2^{-k}$.  Let $\zeta_j(i)$ be the the indicator random variable which takes the value 1 if $\ch{i,i+j+1}$ is a chord in $D_n$, and this chord either over-crosses all of its neighbors or it under-crosses all of its neighbors. Conditioned on the event $F$, the number of single-chord strong components is $\sum_{j=0}^{2\log n}\sum_{i=1}^{2n}\zeta_j(i)$. 

\begin{lemma}
For any positive integer $k\le 2\log n$, we have 
\[	\sum_{i=1}^{2n}\mean[\zeta_k(i)]= \frac{1}{2^{k-1}}+O\lp \frac{k^2}{n}\rp.	\]
\end{lemma}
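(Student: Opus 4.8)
The plan is to integrate out the random orientation first and reduce to an unoriented counting problem. Fix $i$ and write $c=\ch{i,i+k+1}$ for the length-$k$ chord in question; its smaller block is $\{i+1,\dots,i+k\}$. Conditioned on $c\in C_n$, let $D$ denote the degree of $c$, i.e.\ the number of chords crossing it. Since each crossing of $D_n$ is oriented by an independent fair coin, the conditional probability (over the coins) that $c$ over-crosses all of its neighbors is $2^{-D}$, and likewise that it under-crosses all of them is $2^{-D}$; for $D\ge 1$ these two events are disjoint, so the event defining $\zeta_k(i)$ has conditional probability $2^{1-D}$, while for $D=0$ it has probability $1$. Integrating over $C_n$ and using $\pr(c\in C_n)=1/(2n-1)$, this gives the identity
\be
\mean[\zeta_k(i)] = \frac{1}{2n-1}\,\mean\big[\,2^{1-D}-\ind{D=0}\ \big|\ c\in C_n\,\big],
\ee
the subtracted term correcting the over-count $2^{1-0}=2$ in the case $D=0$.

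Next I would evaluate $D$ combinatorially. Given $c\in C_n$, a chord crosses $c$ precisely when it has exactly one endpoint in the $k$-point smaller block, so if exactly $j$ chords lie entirely inside that block, then $k-2j$ interior endpoints go to the outside and $D=k-2j$. Writing $p_j:=\pr(\text{exactly }j\text{ interior chords}\mid c\in C_n)$, the main contribution comes from $j=0$, where $D=k$ and the orientation weight is $2^{1-k}=2^{-(k-1)}$. The probability of any interior chord is small: the expected number of interior pairs forming a chord is $\binom{k}{2}\frac{1}{2n-3}=O(k^2/n)$, so $p_0=1-O(k^2/n)$, while $\mean[\ind{D=0}\mid c]=p_{k/2}\ind{k\text{ even}}$ is of order $(2n)^{-k/2}=O(k^2/n)$ and can be absorbed into the error.

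It then remains to bound the contribution of $j\ge 1$. Here the orientation weight $2^{1-(k-2j)}=2^{1-k}4^j$ grows in $j$, but $p_j$ decays fast enough to compensate. Bounding $p_j$ by the expected number of ways to place $j$ disjoint interior chords,
\be
p_j\ \le\ \binom{k}{2j}(2j-1)!!\,\frac{(2n-2j-3)!!}{(2n-3)!!}\ \le\ \frac{1}{j!}\lp \frac{k^2}{n}\rp^{j},
\ee
where the last step uses $(2n-3)(2n-5)\cdots(2n-2j-1)\ge n^{j}$ for $j\le k/2\le\log n$ and large $n$. Hence $\sum_{j\ge1}p_j4^j\le e^{4k^2/n}-1=O(k^2/n)$, using that $k\le 2\log n$ forces $k^2/n\to 0$. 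Combining the pieces, $\mean[2^{1-D}\mid c\in C_n]=2^{1-k}\big(1+O(k^2/n)\big)$, and summing the $2n$ identical contributions by symmetry,
\be
\sum_{i=1}^{2n}\mean[\zeta_k(i)]=\frac{2n}{2n-1}\,2^{1-k}\big(1+O(k^2/n)\big)=\frac{1}{2^{k-1}}+O\!\lp \frac{k^2}{n}\rp,
\ee
since $2^{1-k}\le 1$ for $k\ge 1$ and $2n/(2n-1)=1+O(1/n)$.

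The main obstacle is precisely the estimate of the $j\ge 1$ terms: one must verify that the exponentially growing orientation weight $4^j$ does not overwhelm the decaying interior-chord probabilities $p_j$. This is exactly where the hypothesis $k\le 2\log n$ (equivalently $k^2/n\to 0$) enters, making the series $\sum_{j\ge1}p_j4^j$ comparable to its first term.
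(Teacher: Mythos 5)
Your argument is correct and follows essentially the same route as the paper's: condition on the existence of $c=\ch{i,i+k+1}$, extract the main term $2^{1-k}/(2n-1)$ from the event that no chord lies entirely inside the small block (so that $c$ has exactly $k$ neighbours and the monochromatic-orientation probability is exactly $2^{1-k}$), and show the complementary event contributes $O(k^2/n^2)$ per endpoint. The only place you do more work than needed is the series $\sum_{j\ge 1}p_j4^j$: since the orientation probability $2^{1-D}$ never exceeds $2$, the entire contribution of $j\ge 1$ is at most $2\,\pr(j\ge 1\mid c\in C_n)=O(k^2/n)$, so the ``exponentially growing weight'' you identify as the main obstacle can be bounded trivially, which is exactly how the paper's sandwich $p_1\le\mean[\zeta_k(i)]\le p_1+p_2$ dispenses with it.
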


\begin{proof}
Let  $p_1$ be the probability that the chord $\ch{i,i+k+1}$ exists, it has exactly $k$ neighbors, and it either over-crosses or under-crosses all of its neighbors, and let $p_2$ be the probability that $\ch{i,i+k+1}$ exists and it has fewer than $k$ neighbors. Then, we have $p_1\le \mean[\zeta_k(i)]= \pr(\zeta_k(i)=1)\le p_1+p_2$. If $\ch{i,i+k+1}$ exists and has fewer than $k$ neighbors, then there must be a chord with both endpoints in $[i+1,i+k]$. Hence the probability $p_2$ is bounded above by $ {k \choose 2}(2n-5)!!/(2n-1)!!$, so $p_2=O(k^2/n^2)$. On the other hand,
\[	 p_1= \frac{1}{2n-1}\cdot \lp 1-O(k^2/n^2)\rp\cdot \frac{1}{2^{k-1}},		\]
where the first factor accounts for the existence of the chord $c=\ch{i,i+k+1}$, the second one accounts for $c$ having exactly  $k$ neighbors, and the last one accounts for $c$ under-crossing or over-crossing its neighbors. Combining the above, we get
\[	 \mean[\zeta_k(i)]= \frac{1}{2^kn}+O(k^2/n^2).	\]
Summing over all $i$, we obtain
\[	\sum_{i=1}^{2n}\mean[\zeta_k(i)]= \frac{1}{2^{k-1}}+O\lp \frac{k^2}{n}\rp.		\qedhere		\]
\end{proof}

\begin{corollary}\label{k_4}
For any $\omega=\omega(n)$ that tends to infinity, \whp, there is no chord $c$ of length at least $\omega$ such that $c$ is a single-chord strong component.
\end{corollary}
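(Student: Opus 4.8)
The plan is to bound the expected number of length-$\ge \omega$ single-chord strong components by a first-moment computation and then apply Markov's inequality, working throughout on the \whp\ event $F$ introduced above. Recall that, conditioned on $F$, every chord of length at least $2\log n$ already lies in the unique giant strong component, and for a chord $c$ of length $k<2\log n$ the discussion above established that $c$ is a single-chord strong component if and only if $\zeta_k(i)=1$, i.e.\ $c$ over-crosses all of its neighbors or under-crosses all of them. Consequently, conditioned on $F$, the number of single-chord strong components whose length is at least $\omega$ is exactly $\sum_{j=\omega}^{\lf 2\log n\rf}\sum_{i=1}^{2n}\zeta_j(i)$, since no singleton component can have length exceeding $2\log n$ on this event.

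First I would invoke the preceding lemma, which gives $\sum_{i=1}^{2n}\mean[\zeta_j(i)]=2^{-(j-1)}+O(j^2/n)$ for each $j\le 2\log n$. Summing over $\omega\le j\le \lf 2\log n\rf$ then yields
\[
\mean\lb \sum_{j=\omega}^{\lf 2\log n\rf}\sum_{i=1}^{2n}\zeta_j(i)\rb = \sum_{j=\omega}^{\lf 2\log n\rf}\lp 2^{-(j-1)}+O(j^2/n)\rp = O\lp 2^{-\omega}\rp + O\lp (\log n)^3/n\rp,
\]
and both terms tend to $0$ as $n\to\infty$ because $\omega\to\infty$. (If $\omega\ge 2\log n$ the range of summation is empty on $F$, so the statement is immediate.) Markov's inequality then forces the probability that this count is at least $1$ to vanish on $F$, and since $\pr(F^c)\to 0$ by Lemmas~\ref{k_3} and~\ref{only one complex}, a union bound over $F$ and $F^c$ gives the claim.

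The routine part is the geometric-plus-harmless-error summation above; the only point requiring care is the equivalence, on $F$, between a length-$k$ singleton strong component and the event $\zeta_k(i)=1$. The forward direction (source or sink $\Rightarrow$ singleton) is automatic, while the converse uses crucially that all neighbors of a short chord lie in one strongly connected giant: if such a $c$ had both an incoming and an outgoing edge, concatenating them with a path through the giant would place $c$ on a directed cycle, contradicting that $c$ is its own component. This is precisely the reasoning already recorded in the paragraph preceding the statement, so no genuinely new obstacle arises; the corollary is essentially a tail estimate on the exact count established there.
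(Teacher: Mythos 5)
Your argument is correct and follows essentially the same route as the paper: sum the expectations $\sum_i\mean[\zeta_j(i)]=2^{-(j-1)}+O(j^2/n)$ over $\omega\le j\le 2\log n$ to get $O(2^{-\omega}+\log^3 n/n)\to 0$, apply Markov's inequality, and combine with the fact that $F$ holds \whp. The extra care you take in justifying the equivalence between a singleton strong component and the event $\zeta_k(i)=1$ on $F$ is exactly the reasoning the paper records in the paragraph preceding the lemma, so nothing new is needed.
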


\begin{proof}
By the previous lemma, we have
 \[  		 \sum_{j=\omega}^{2\log n}\sum_{i=1}^{2n}\mean[\zeta_j(i)]= \sum_{j=\omega}^{2\log n}\lb \frac{1}{2^{j-1}} + O(j^2/n)\rb= O\big(2^{-\omega}+\log^3n/n\big) .		\]
Hence, $ \sum_{j=\omega}^{2\log n}\sum_{i=1}^{2n}\zeta_j(i)=0$ \whp. Coupling this result with the fact that $F$ occurs with high probability, we get the desired result.
\end{proof}

\begin{theorem}
The number of single-chord strong components of $D_n$ converges in distribution to a Poisson random variable with mean $3$. \Whp, other than the single-chord strong components, there is only one strong component, which is the giant component.
\end{theorem}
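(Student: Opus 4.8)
The plan is to treat the two assertions separately and to establish the Poisson limit by the method of factorial moments, in the spirit of Lemma~\ref{Y_j} and Theorem~\ref{MDP-SC}. The second assertion is essentially already in hand: Lemma~\ref{only one complex} shows that \whp\ there is at most one strong component of size larger than~$1$, while the discussion following Lemma~\ref{k_1} shows that \whp\ a strong component of size $n-o(n)$ exists. Hence \whp\ there is exactly one strong component of size larger than~$1$, and every remaining strong component is a single chord; it then remains only to count these.

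For the first assertion I would work throughout conditioned on the event $F$, which occurs \whp. Under $F$ every chord of length at least $2\log n$ lies in the giant component, and every chord $c$ with $\ell(c)=k<2\log n$ has exactly $k$ neighbours, all of them long; such a $c$ is a single-chord strong component exactly when it over-crosses all of its neighbours or under-crosses all of them (automatically so when $k=0$). For $j\ge0$ let $N_j$ be the number of single-chord strong components of length~$j$, so that the quantity of interest is $S_n=\sum_{j\ge0}N_j$. Combining $\mean[N_0]=\mean[L_0]\to1$ with the lemma preceding Corollary~\ref{k_4} gives
\[
\mean[S_n]=\sum_{j\ge0}\mean[N_j]\;\longrightarrow\;1+\sum_{j\ge1}\frac{1}{2^{\,j-1}}=1+2=3,
\]
so the means are $\lambda_0=1$ and $\lambda_j=2^{-(j-1)}$ for $j\ge1$, summing to~$3$.

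The core of the argument is the joint factorial moment calculation for $(N_0,\dots,N_K)$ with $K$ fixed. Fixing $\mathbf r=(r_0,\dots,r_K)$ with $s=\sum_jr_j$, I would expand $\mean\big[\prod_j\binom{N_j}{r_j}\big]$ as a sum over $s$-tuples of distinct chords in which $r_j$ chords have length~$j$ and each chord is required to be a single-chord strong component. The dominant contribution comes from pairwise disjoint, pairwise non-crossing tuples, for which the probability factorises: the chords all exist with probability $(1+O(s^2/n))(2n)^{-s}$; each length-$j$ chord has exactly $j$ neighbours with probability $1-O(j^2/n^2)$; and the orientation events are \emph{independent} across the $s$ chords, since under $F$ no two short chords cross, so their crossing-sets with long chords are disjoint and are oriented by independent fair coins, contributing a factor $2^{-(j-1)}$ per length-$j$ chord (and $1$ for $j=0$). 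As the number of admissible position-tuples is $(1+O(1/n))(2n)^s\big/\prod_jr_j!$, these estimates combine to give
\[
\mean\Big[\textstyle\prod_{j=0}^{K}\binom{N_j}{r_j}\Big]\;\longrightarrow\;\prod_{j=0}^{K}\frac{\lambda_j^{\,r_j}}{r_j!},
\]
which are precisely the joint factorial moments of independent Poisson variables with means $\lambda_j$. The multivariate method of moments then yields $(N_0,\dots,N_K)\stackrel{d}{\longrightarrow}$ independent Poissons, whence $\sum_{j=0}^{K}N_j\stackrel{d}{\longrightarrow}\Poisson\big(\sum_{j=0}^{K}\lambda_j\big)$.

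Finally I would pass from this truncation to $S_n$ itself. Because $\sum_{j\ge K}\lambda_j=2^{-(K-2)}\to0$ as $K\to\infty$, Markov's inequality applied to the means of the lemma preceding Corollary~\ref{k_4} makes $\pr\big(\sum_{j\ge K}N_j\ge1\big)$ uniformly small for large $K$, and Corollary~\ref{k_4} rules out genuinely long single-chord components; a standard truncation argument then upgrades $\sum_{j\le K}N_j\stackrel{d}{\longrightarrow}\Poisson\big(\sum_{j\le K}\lambda_j\big)$ to $S_n\stackrel{d}{\longrightarrow}\Poisson(3)$, and dropping the conditioning on $F$ (which holds \whp) leaves the limit unchanged. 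The step I expect to be the main obstacle is this joint factorial moment computation: one must verify that the orientation events really do decouple across the chosen chords — exactly where the absence of crossing short chords guaranteed by $F$ is used — and that the error terms from endpoint-disjointness, from the ``exactly $j$ neighbours'' requirement, and from the truncation at length $2\log n$ remain negligible after summation over the growing range of lengths.
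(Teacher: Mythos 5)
Your proposal is correct and rests on the same structural skeleton as the paper's proof --- Lemma~\ref{k_3}, Lemma~\ref{only one complex}, and Corollary~\ref{k_4} to reduce everything to short chords whose neighbours are all long, and the observation that such a chord of length $j\ge 1$ is isolated as a strong component with probability $2^{1-j}$ --- but the final limit step is carried out by a genuinely different route. You compute the joint factorial moments of the counts $(N_0,\dots,N_K)$ of single-chord strong components of each length directly, which forces you to re-do a tuple-counting argument in the style of Theorem~\ref{MDP-SC} with an extra orientation factor per chord, and to verify that the orientation events decouple across the chosen chords; you correctly identify that this decoupling is exactly what the event $F$ buys (no two short chords cross, so their crossing sets with long chords are disjoint and are oriented by disjoint families of fair coins). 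The paper instead represents the number of trivial components, conditionally on $(L_0,\dots,L_k)$, as a sum of independent binomials $\eta_j\sim\Bin(L_j,p_j)$ with $p_0=1$ and $p_j=2^{1-j}$, and then passes to the limit via characteristic functions together with Theorem~\ref{MDP-SC}; this ``thinning a Poisson is Poisson'' route avoids any new moment computation and pushes all the combinatorics into the already-proved joint Poisson limit of the $L_j$'s. Your approach is more self-contained but duplicates work; the paper's is shorter given Theorem~\ref{MDP-SC}. One small point to tighten in your write-up: since the $N_j$ are unbounded, passing from moments conditioned on $F$ to unconditional ones needs the (easy) check that the contribution of $F^c$ to each fixed factorial moment vanishes, not merely that $\pr(F^c)\to 0$; the paper's conditional formulation sidesteps this in the same informal way.
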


\begin{proof}
The proof is an amalgamation of the previous results.  We will call a strong component of size 1 a \textit{trivial component}, a chord of length smaller than $2\log n$ a \textit{small} chord, and a chord of length at least $2\log n$ a \textit{big} chord. Let $\omega$ be an integer valued function of $n$ that tends to infinity as $n$ tends to infinity. By Lemma \ref{k_3}, Lemma \ref{only one complex}, and Corollary~\ref{k_4}, \whp, there is a giant strong component that contains all the chords of length at least $\omega$ and all other components are trivial components.
Lemma~\ref{only one complex} says that \whp\ there is no small chord $c$ crossing fewer than $\ell(c)$ big chords, where $\ell(c)$ denotes the length of $c$. Let $H$ be the intersection of all these high probability events. Conditioned on $H$, the trivial components correspond to small length chords that have as many neighbors as their lengths and they either over-cross all their neighbors or they under-cross all their neighbors. We define binomially distributed random variables
\[ \eta_i \sim \Bin(L_i,p_i),	\]
where $L_i$ denotes the number of length $i$ chords in the diagram, $p_0=1$, and $p_i=2^{1-i}$ for $i\ge 1$. Conditioned on $H$, the number of trivial components is given by $\eta_0+\cdots+\eta_{\omega-1}$. Now fix a positive integer $k$ and define
\[	\phi_{n,k}(t):=\mean\lb \exp\lp it \sum_{j=0}^k\eta_j\rp\rb.	\]
Note that $\eta_0,\dots,\eta_k$ are independent conditioned on $L_0,\dots,L_k$. Thus,
\begin{align*}
\mean\lb \exp\lp it \sum_{j=0}^k\eta_j\rp\bigg | 	L_0,\dots,L_k		 \rb 	&= 	\prod_{j=0}^{k}\mean[e^{it\eta_j}|L_j]\\
&= \prod_{j=0}^{k}\mean[e^{it\Bin(L_j,p_j)}|L_j]\\
&=\prod_{j=0}^{k}(e^{it}p_j+1-p_j)^{L_j}.
\end{align*}
Using Theorem \ref{MDP-SC},
\begin{align*}
\phi_{n,k}(t)= \mean\lb \prod_{j=0}^{k}(e^{it}p_j+1-p_j)^{L_j}\rb \to  \mean\lb \prod_{j=0}^{k}(e^{it}p_j+1-p_j)^{T_j}\rb,
\end{align*}
where $T_j$'s are independent copies of a Poisson random variable with mean $1$. On the other hand,
\begin{align*}
 \mean\lb \prod_{j=0}^{k}(e^{it}p_j+1-p_j)^{T_j}\rb &=   \prod_{j=0}^{k}\mean[(e^{it}p_j+1-p_j)^{T_j}] =\prod_{j=0}^{k}\exp(e^{it}p_j-p_j)\\
&=\exp \lb (e^{it}-1)\sum_{j=0}^{k}p_j\rb.
\end{align*}
The calculations above show that $\phi_{n,k}(t) \to \exp \lb 3(e^{it}-1)\rb$ as $k \to \infty$, where $ \exp \lb 3(e^{it}-1)\rb$ is the characteristic function of a Poisson random variable with mean 3. Thus,
\[	
\eta_0+\cdots+\eta_{k} \stackrel{d}{\longrightarrow} \Poisson(3)
\]
as $k\to \infty$.
We conclude the proof by taking $k$ to infinity first and then letting $\omega=k$.
\end{proof}


\section{Growth of a random chord diagram}\label{sec: growth of C_n}

In the previous sections we considered the static random chord diagram $C_n$. In this section we discuss a couple of dynamically growing random chord diagram models. In both models, we start with no chord and add chords one at a time randomly. In $n$ steps, we obtain a chord diagram with $n$ chords. We show that both models produce a uniformly random chord diagram after $n$ steps. Once this is established, $C_n$ can be viewed as the $n$th step snapshots of two different random processes. 

With the associated intersection graphs, the random chord diagram processes can also be viewed as graph processes. Random graph processes are studied widely in the fields of random graphs to understand the dynamics of random graphs. The best known example is the graph process $\widetilde{G}_n$, where one starts with $n$ labeled vertices and no edges, and at each step adds an edge choosing the edge uniformly at random from all non-edges. The Erd\H{o}s-R\'{e}nyi graph $G(n,m)$ can be viewed as a snapshot at time $m$ of the process~$\widetilde{G}_n$. 

After we define both models, we show that the two models are actually not much different. We use these growth models to give another extension of the result by Flajolet and Noy (Theorem~\ref{FN}).  

\subsection{Continuous model}

Consider the following growth process. Initially we have a circle with no chords and no endpoints on it. At step 1, we add a chord where we do not distinguish any two chords at this step. Thus, after step 1, we have a chord diagram with one chord. At step 2, there are three ways to draw the second chord; either the second chord does not intersect the first one, which can happen in two ways, or it intersects the first one. More specifically, the first chord separates the circle into two arcs, and either both endpoints of the second chord lie on the first arc, or they both lie on the second arc, or they lie on different arcs. One of these possibilities is chosen uniformly randomly. In general, after step $k$, there are $k$ chords and $2k$ disjoint arcs. For the next chord, there are ${2k\choose 2}+2k={2k+1\choose 2}$ ways of choosing the arcs on which the endpoints lie, and one of them is chosen uniformly at random. For convenience, which will be clear later, one of the endpoints of the first chord is labeled with $1$ after it is created. Basically, it will serve as a reference point. 

\medskip
\noindent {\bf Remarks.}
\begin{enumerate}[(1)] \itemsep=0pt \parsep=0pt
\vspace {-\topsep}
\item In this process, at each step, what matters is only the selection of the arcs for the endpoints of the chord to be added. We do not care about where exactly on the arc an endpoint is situated. 
\item We do not label the endpoints during the process except one of the endpoints of the first chord. However, after each step we know the relative ordering of the endpoints.
\end{enumerate}

Let $U_k$ denote the chord diagram obtained after the $k$th step of the above process. At this time, we can label the endpoints of the chords in $U_k$ starting with the already labeled endpoint $1$ and labeling the $j$th endpoint we encounter with $j$ as we traverse the circle clockwise. The following lemma asserts that $C_n$ is the snapshot of the above process after step $n$.

\begin{lemma}\label{continuous uniform}
$U_k$ is uniform among all the chord diagrams with $k$ chords on the set of endpoints $[2k]$.
\end{lemma}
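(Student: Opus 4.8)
The plan is to prove by induction on $k$ that $U_k$ is uniform on the set of all chord diagrams with $k$ chords on the endpoint set $[2k]$. The base case $k=1$ is trivial, since there is only one such diagram. For the inductive step, I would fix an arbitrary target diagram $\D$ on $[2k+2]$ and compute $\pr(U_{k+1}=\D)$ by conditioning on the previous snapshot, writing
\[
\pr(U_{k+1}=\D) = \sum_{\D'} \pr(U_{k+1}=\D \mid U_k = \D')\,\pr(U_k=\D'),
\]
where the sum is over diagrams $\D'$ on $[2k]$ that could produce $\D$ after one step. The key combinatorial observation is that each target $\D$ arises from exactly one such $\D'$: deleting from $\D$ the chord whose \emph{clockwise-first} endpoint (relative to the reference point labeled $1$) completes the pairing, and then relabeling the remaining $2k$ endpoints in clockwise order starting from $1$, recovers a unique $\D'$. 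So the sum collapses to a single term.

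The heart of the argument is the transition probability. In the continuous model, at step $k+1$ we choose an unordered pair of arcs (or a single arc, for a non-crossing chord landing both endpoints in the same arc) from the $2k$ arcs determined by $U_k$, uniformly among the $\binom{2k}{2}+2k = \binom{2k+1}{2}$ possibilities. The crucial point is that \emph{every} way of inserting a $(k+1)$st chord into a fixed $\D'$ yields a \emph{distinct} labeled diagram on $[2k+2]$, and conversely each $\D$ (with its distinguished new chord) corresponds to exactly one choice of arc-pair. Hence
\[
\pr(U_{k+1}=\D \mid U_k=\D') = \frac{1}{\binom{2k+1}{2}} = \frac{1}{(2k+1)k},
\]
independently of both $\D'$ and $\D$. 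Combining this with the inductive hypothesis $\pr(U_k=\D') = 1/(2k-1)!!$ gives
\[
\pr(U_{k+1}=\D) = \frac{1}{(2k+1)k}\cdot\frac{1}{(2k-1)!!}.
\]

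Finally I would check that this matches the uniform probability $1/(2k+1)!!$. Indeed $(2k+1)!! = (2k+1)(2k-1)!!$, so it remains to confirm that $(2k+1)k\,(2k-1)!! = (2k+1)!!$, which would require $k\,(2k-1)!! = (2k-1)!!$ — this is \emph{false}, signalling that the naive count of $\binom{2k+1}{2}$ arc-choices overcounts relative to distinct labeled diagrams. The main obstacle, and the step demanding real care, is precisely this bijection bookkeeping: one must verify that the number of arc-configurations producing a fixed labeled $\D$ is constant, and that the number of labeled diagrams on $[2k+2]$ reachable from a single $\D'$ equals $\binom{2k+1}{2}$ only after accounting correctly for the reference-point normalization. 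The clean way to resolve this is to recall $(2k+1)!!/(2k-1)!! = 2k+1$, so uniformity forces the per-step branching factor to be $2k+1$ rather than $\binom{2k+1}{2}$; I would therefore re-examine how the relabeling identifies arc-choices, showing that the $\binom{2k+1}{2}$ geometric placements collapse to exactly $2k+1$ distinct labeled outcomes per $\D'$ when the reference endpoint is held fixed, so that $\pr(U_{k+1}=\D\mid U_k=\D') = 1/(2k+1)$ and the induction closes to give $\pr(U_{k+1}=\D)=1/(2k+1)!!$.
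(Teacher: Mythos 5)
Your induction framework is workable, but the step you flag as ``the heart of the argument'' contains a genuine error, and the patch you propose at the end is the wrong one. The claim that each target $\D$ on $[2k+2]$ arises from exactly one predecessor $\D'$ is false: the continuous process does not remember which chord was added last, and \emph{any} of the $k$ chords of $\D$ other than the root chord (the one containing endpoint $1$, which is necessarily the first chord drawn) could have been the $(k+1)$st chord. There is no canonical ``clockwise-first endpoint that completes the pairing.'' So the sum over predecessors does not collapse to one term; it has $k$ terms (counting a predecessor once for each designated last chord mapping onto it). Your transition probability $1/\binom{2k+1}{2}$ is correct — each of the $\binom{2k}{2}+2k$ arc-choices produces a \emph{distinct} relative ordering of the $2k+2$ endpoints and hence a distinct labeled diagram, so there is no collapsing of placements. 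The arithmetic then closes correctly:
\[
\pr(U_{k+1}=\D) \;=\; k\cdot\frac{1}{(2k-1)!!}\cdot\frac{1}{\binom{2k+1}{2}} \;=\; \frac{k}{(2k-1)!!}\cdot\frac{2}{(2k+1)(2k)} \;=\; \frac{1}{(2k+1)!!}.
\]
Your proposed fix — that the $\binom{2k+1}{2}$ geometric placements collapse to only $2k+1$ distinct labeled outcomes — would break the consistency check the other way: one verifies $(2k-1)!!\cdot\binom{2k+1}{2}=(2k+1)!!\cdot k$, confirming that each of the $(2k+1)!!$ diagrams on $[2k+2]$ is reached by exactly $k$ parent transitions, each of probability $2/((2k+1)2k)$.

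For comparison, the paper avoids induction entirely: it fixes $\C$, sums over all $(k-1)!$ possible orders of appearance of the non-root chords, and computes each order's probability as $\prod_{i=2}^{k}\binom{2i-1}{2}^{-1}=2^{k-1}/(2k-1)!$, giving $(k-1)!\,2^{k-1}/(2k-1)!=1/(2k-1)!!$ in one stroke. That global count is essentially the ``unrolled'' version of the corrected induction above; either route works once the factor of $k$ (equivalently, the $(k-1)!$ orderings) is accounted for.
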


\begin{proof}
Let $\C$ be a chord diagram with $k$ chords on $[2k]$. We need to show that $\pr(U_k=\C)= 1/(2k-1)!!$. Let $c_1,\dots,c_k$ be a labeling of the chords of $\C$ such that the chord containing endpoint $1$ is labeled as $c_1$. In the evolution of $U_k$, the chords $c_2,\dots,c_k$ can appear in any order. Let $x_2\dots x_k$ be a permutation of the chords $c_2,\dots,c_k$, which represents a particular order of appearance in the evolution. Conditioned on $\{x_1,\dots,x_{i-1}\}$, the probability that $x_i$ occurs at step $i$ is  $1/{2i-1 \choose 2}$. Thus, the probability that $x_i$ occurs at step $i$ for all $i\in [2,k]$ is
\[	\prod_{i=2}^{k}{2i-1 \choose 2}^{-1}= \frac{2^{k-1}}{(2k-1)!}.						\]
Since there are $(k-1)!$ orderings of $c_2,\dots,c_k$, we have
\[	 \pr(U_k=\C)=(k-1)!\frac{2^{k-1}}{(2k-1)!}=\frac{1}{(2k-1)!!} . 		\qedhere		\]
\end{proof}

Next we will prove an extension of Theorem~\ref{FN} which says that $C_n$ is monolithic \whp\ as $n\to \infty$.
We call $m$  a \textit{switching point} if $U_m$ is monolithic and $U_{m+1}$ is not. The next lemma shows that no switching point occurs after some point \whp.

\begin{lemma}\label{continuous-no switching}
For any $\omega \to \infty$, \whp, there is no switching point greater than $\omega$ .
\end{lemma}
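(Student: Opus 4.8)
The plan is to prove the lemma by a union bound over the candidate switching points together with a local analysis of what a single step of the continuous process can do to a monolithic diagram. Write $p_m := \pr(m \text{ is a switching point}) = \pr(U_m \text{ is monolithic and } U_{m+1} \text{ is not})$. The union bound gives
$\pr(\exists \text{ switching point} > \omega) \le \sum_{m > \omega} p_m$,
so it suffices to show that $\sum_m p_m$ converges with a tail vanishing as $\omega\to\infty$; concretely I will argue that $p_m = O(1/m^2)$, whence $\sum_{m>\omega}p_m = O(1/\omega)\to 0$.

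To estimate $p_m$, recall that $U_{m+1}$ is obtained from $U_m$ by choosing, uniformly among the $\binom{2m+1}{2}$ pairs of (not necessarily distinct) arcs, where to insert the two endpoints of the new chord $c$. Since a switching point requires $U_m$ to be monolithic, I condition on $U_m$ being monolithic with root component $R$ and $j$ simple chords, and count the placements of $c$ that destroy monolithicity. The key structural claim is that every bad placement is localized at one of the $j$ simple chords, so that there are only $O(j)$ of them.

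The heart of the argument is the case analysis behind this claim. Two elementary facts drive it: a new chord can cross a given simple chord $\langle i,i+1\rangle$ only by placing an endpoint in the (endpoint-free) inner arc between $i$ and $i+1$, so $c$ crosses at most two simple chords; and since no two simple chords are adjacent, $c$ can enclose at most one simple chord without also crossing a chord of $R$. Using these I will verify that monolithicity breaks only in a few local ways, each attached to a specific simple chord: (a) $c$ becomes an isolated non-simple chord enclosing exactly one simple chord; (b) both endpoints of $c$ fall in the inner arc of a simple chord $s$, turning $s$ into an isolated non-simple chord; (c) $c$ crosses a single simple chord and nothing else, creating a separate component of size two; or (d) $c$ is a new simple chord placed immediately beside an existing one, violating condition (ii) of the definition. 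Each mechanism contributes $O(1)$ placements per simple chord. Conversely, whenever $c$ meets a chord of $R$ it merely enlarges the root, and inserting endpoints can never render two previously non-adjacent simple chords adjacent; so no placement away from the simple chords is bad, and in particular when $j=0$ no placement is bad at all. This yields $O(j)$ bad placements altogether.

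Consequently $\pr(U_{m+1} \text{ not monolithic} \mid U_m) = O(j/m^2)$, where $j=j(U_m)$ denotes the number of simple chords of $U_m$, and averaging over $U_m$ gives $p_m = O(m^{-2})\,\mean[j(U_m)]$. Since $j(U_m)=L_0$ satisfies $\mean[L_0] = 2m/(2m-1) = O(1)$ (indeed it tends to $1$, consistent with Lemma~\ref{Y_j}), we obtain $p_m = O(1/m^2)$ and the desired tail bound. I expect the main obstacle to be the exhaustive verification in the case analysis — in particular ruling out any ``global'' mechanism, such as a new chord enclosing the entire root component, that could produce $\Omega(m)$ bad placements rather than $O(j)$. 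The point that makes all genuine defects local is that the interspersed simple chords split the root endpoints into several cyclic runs, so no proper contiguous block other than a single simple chord is a union of complete chords.
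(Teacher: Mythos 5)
Your proof is correct, and its core is the same as the paper's: a first-moment bound $\sum_{m>\omega}\pr(m\text{ is a switching point})$ combined with the observation that, conditioned on $U_m$ being monolithic with $j$ simple chords, only $O(j)$ of the $\binom{2m+1}{2}$ placements of the new chord destroy monolithicity. Your mechanisms (a)--(d) are exactly the paper's count of six bad placements per simple chord (the $\binom{3}{2}+3$ ways to put the two new endpoints into the three arcs flanking and inside that simple chord), and the structural facts you isolate --- a chord crossing the root only enlarges the root, inserting endpoints cannot make two simple chords adjacent, and connectivity of the root plus condition~(ii) force any non-crossing non-simple new chord to cut off exactly one simple chord --- are precisely what is needed. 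Where you genuinely diverge is the averaging step: the paper bounds $\sum_k |A(m,k)|\cdot k$ by enumerating monolithic diagrams with $k$ simple chords via a connected-diagram count and a term-ratio computation, whereas you write $\pr(\xi_m=1)\le (3/m^2)\,\mean[L_0(U_m)]$ and evaluate $\mean[L_0(U_m)]=2m/(2m-1)$ by linearity of expectation using the uniformity of $U_m$ (Lemma~\ref{continuous uniform}). That shortcut is cleaner and avoids estimating $|A(m,k)|$ altogether. One phrasing to tighten: the closing claim that ``no proper contiguous block other than a single simple chord is a union of complete chords'' is literally false, since the complement of a single simple chord is such a block; but that configuration is exactly your case (a), so the enumeration is complete --- just restate the claim as ``one of the two sides of a non-crossing, non-simple new chord must consist of a single simple chord.''
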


\begin{proof}
Let $\xi_m$ be the indicator random variable that takes the value $1$ if $m$ is a switching point, that is, 
\[
\xi_m=
\begin{cases}
1, & \text{if } m \text{ is a switching point} \\
0 , & \text{otherwise.}
\end{cases}  
\]
We want to find an upper bound on the expected value of $\xi_m$. Let $A(m,k)$ denote the set of monolithic chord diagrams with a total of $m$ chords with $k$ of them being simple. Let $A(m)$ denote the union of $A(m,k)$ for $k\ge 0$. We have
\begin{align}\label{expected switching}
\mean[\xi_m]&= \pr(\xi_m=1)= \sum_{\C \in A(m)} \pr(U_m=\C)\cdot \pr(U_{m+1} \not \in A(m+1) | U_m= \C)		\notag	\\
&= \frac{1}{(2m-1)!!}\sum_{k=0}^{\lf 2m/3\rf} \sum_{\C \in A(m,k)} \pr(U_{m+1} \not \in A(m+1) | U_m= \C).
\end{align}

Note that the upper bound for $k$ in the outer sum above is $\lf 2m/3\rf$ since the number of simple chords in $\C$ cannot exceed $2m/3$ provided that $\C \in A(m)$. To see this, let $\C \in A(m,k)$.  The non-trivial component of $\C$ has $m-k$ chords and the endpoints of these chords determine $2m-2k$ intervals. Since each such interval can contain at most one simple chord, we have $k\le 2m-2k$ and consequently $k \le 2m/3$. 

Now suppose $U_m=\C$, where $\C \in A(m,k)$ and $\C$. In this case, $U_{m+1} \not \in A(m+1)$ if and only if the $(m+1)$st chord lies in one of the $(2m-2k)$ intervals determined by the non-trivial component and the same interval contains a simple chord. 
For each simple chord, there are $6$ possible ways to add a new chord that causes $U_{m+1}$ not belong to $A(m+1)$. (This is the number of ways to choose two endpoints from three arcs determined by the two endpoints of the existing simple chord.) Thus, 
\[ 	 \pr(U_{m+1} \not \in A(m+1) | U_m= \C))= \frac{6k}{{2m+1 \choose 2}} \le \frac{3k}{m^2}.		\]

Next we want to bound the number of summands of the inner sum in~\eqref{expected switching}, namely the size $A(m,k)$. Each $\C \in A(m,k)$ contains a connected component of size $m-k$, which  contains the endpoint $1$. Let $f(n)$ denote the number of connected chord diagrams with $n$ chords. Now the number of terms in the inner sum in~\eqref{expected switching} is $f(m-k){2m-2k \choose k}$, since we can locate the simple chords in ${2m-2k \choose k}$ ways. Hence, using the trivial upper bound $ (2m-2k-1)!!$ for $f(m-k)$,
\begin{align}\label{expected sum for xi_m}
\mean[\xi_m] 		\le
\frac{1}{(2m-1)!!}\sum_{k=1}^{\lf 2m/3\rf} (2m-2k-1)!!{2m-2k \choose k} \frac{3k}{m^2}\,.
\end{align}
Lastly, we need to bound the sum in~\eqref{expected sum for xi_m}. Let $e_k$ be the term corresponding to index $k$. Taking the quotient of two consecutive terms gives
\be \label{quo}
\frac{e_k}{e_{k+1}}= k \cdot \frac{(2m-2k)(2m-2k-1)^2}{(2m-3k)(2m-3k-1)(2m-3k-2)} \ge k.
\ee
Using~\eqref{quo} we find $(k-1)!e_k\le e_1$. On the other hand,
\be	\label{eq: 3.6.4} e_1= (2m-3)!!{2m-2 \choose 1}\frac{3}{m^2}\le (2m-1)!!\frac{3}{m^2}	.	\ee
Using equations \eqref{expected sum for xi_m}--\eqref{eq: 3.6.4}, we get 
$\mean[\xi_m]\le 3e/m^2$.
Summing over all $m$, we obtain
\[	\sum_{m = \omega}^{\infty} \mean[\xi_m] \le \sum_{m = \omega}^{\infty} \frac{3e}{m^2}= O(\omega^{-1}),	\]
which finishes the proof.
\end{proof}

\begin{corollary}\label{cor: continuous model-monolithicity}
Let $m\to \infty$. \Whp, all the diagrams in $\{U_n\, : n\ge m\}$ are jointly monolithic.
\end{corollary}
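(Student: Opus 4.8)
The plan is to deduce this corollary directly from Lemma~\ref{continuous-no switching} (no switching point past a slowly growing threshold) together with the Flajolet--Noy theorem (Theorem~\ref{FN}), the latter transferred to the process via the distributional identity $U_m\stackrel{d}{=}C_m$ supplied by Lemma~\ref{continuous uniform}. Let $M_m$ denote the event that every diagram in $\{U_n\,:\,n\ge m\}$ is monolithic; the goal is to show $\pr(M_m)\to 1$ as $m\to\infty$.

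The one genuinely structural step is to decompose the failure event correctly. Suppose $M_m$ fails and let $n^\ast\ge m$ be the \emph{first} index at which $U_{n^\ast}$ is not monolithic. If $n^\ast=m$, then $U_m$ itself is not monolithic. If $n^\ast>m$, then by minimality $U_{n^\ast-1}$ is monolithic while $U_{n^\ast}$ is not, so $n^\ast-1$ is a switching point lying in $[m,\infty)$. Hence
\[
\overline{M_m}\ \subseteq\ \{U_m \text{ not monolithic}\}\ \cup\ \{\text{there is a switching point} \ge m\}.
\]
This reduction captures the fact that keeping the whole tail monolithic requires both a monolithic \emph{anchor} at $n=m$ and the \emph{absence} of any later monolithic-to-nonmonolithic transition, since Lemma~\ref{continuous-no switching} by itself rules out transitions but not a diagram that is nonmonolithic throughout.

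Next I would bound each event. For the first, Lemma~\ref{continuous uniform} gives $U_m\stackrel{d}{=}C_m$, so Theorem~\ref{FN} yields $\pr(U_m \text{ not monolithic})=\pr(C_m \text{ not monolithic})=O(1/m)$, where the rate comes from the bound $\sum_k\mean[B_k]=O(1/n)$ in the proof of that theorem (note that condition~(ii) of monolithicity, forbidding adjacent simple chords, is the $k=2$ case and is already covered there). For the second, I would invoke Lemma~\ref{continuous-no switching} with $\omega=m-1\to\infty$, which gives \whp\ no switching point exceeding $m-1$, i.e.\ none in $[m,\infty)$; equivalently, the expectation bound $\sum_{j\ge m}\mean[\xi_j]=O(1/m)$ from its proof gives, via Markov's inequality, that the probability of any switching point $\ge m$ is $O(1/m)$.

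Finally, a union bound over the two events gives $\pr(\overline{M_m})=O(1/m)\to 0$, hence $\pr(M_m)\to 1$, which is exactly the claim. I do not expect any serious obstacle: all the quantitative work has already been carried out in the preceding lemmas, and the only point requiring care is the minimality argument in the decomposition, which ensures that every failure of joint monolithicity is charged either to the anchor $U_m$ or to a genuine switching point at or after $m$.
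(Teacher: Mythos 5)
Your proposal is correct and takes essentially the same route as the paper: the paper's own proof of this corollary is the single line that it ``follows immediately from Theorem~\ref{FN} and Lemma~\ref{continuous-no switching},'' and your argument is exactly that deduction with the decomposition of the failure event (non-monolithic anchor $U_m$ versus a switching point at or after $m$) and the union bound made explicit. No changes needed.
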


\begin{proof}
This follows immediately from Theorem \ref{FN} and Lemma \ref{continuous-no switching}.
\end{proof}

\subsection{Discrete model}

Now we present another model for an evolution of a random chord diagram. As in the previous section, we add the chords one at a time. However, in this case, we start with $2n$ given points on a circle, labeled $1$ through $2n$ clockwise in increasing order. These points constitute a universal set for the endpoint set of a chord diagram obtained during the process. At the first step we create the first chord by choosing a partner uniformly at random for the endpoint 1. Then,  at each later step we add a chord by choosing two of the yet unused endpoints and pairing them up. Thus, at step $k$, we add a chord by joining two of the $2n-2k+2$ endpoints, which do not belong to any of the first $k-1$ chords. We denote by $C_k'$ the random chord diagram obtained after the $k$th chord is added in this process. Now we define a relabeling operation for chord diagrams.

\begin{definition}
(\textit{$\tau$ operator})
Let $\C$ be a chord diagram with $k$ chords whose endpoint set $\{i_1,\dots,i_{2k}\}$ is a subset of $[2n]$, and let $i_1<i_2\cdots<i_{2k}$. We denote by $\tau(\C)$ the chord diagram obtained from $\C$ by relabeling $i_t$ as $t$ for every $t\in [2k]$.   
\end{definition}

\begin{example*}
Let $n=5$ and let $\C$ be the chord diagram consisting of the chords $(2,8), (3,5)$, and $(4,9)$. Then $\tau(\C)$ is the chord diagram consisting of the chords $(1,5), (2,4)$, and $(3,6)$.
\end{example*}

\begin{lemma}
$\tau(C_k')$ is uniformly distributed among all the chord diagrams with $k$ chords on the endpoint set $[2k]$.
\end{lemma}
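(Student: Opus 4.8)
The plan is to compute $\pr\big(\tau(C_k')=\D\big)$ directly for a fixed chord diagram $\D$ on $[2k]$ and show that it equals $1/(2k-1)!!$. The key observation is that $C_k'$ itself is uniform over a \emph{larger} family, namely all chord diagrams whose endpoint set $S$ is a $2k$-subset of $[2n]$ containing the reference label $1$. Once this is established, pushing the law forward through the order-preserving relabeling $\tau$ yields the claim after a short count.

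First I would fix such an endpoint set $S$ (with $1\in S$, $|S|=2k$) and a chord diagram $\C$ on $S$, and compute $\pr(C_k'=\C)$ by the same ordering argument used in Lemma~\ref{continuous uniform}. The chord of $\C$ containing $1$ must be created at step $1$, which happens with probability $1/(2n-1)$; the remaining $k-1$ chords may then be added in any of the $(k-1)!$ orders, and conditioned on the first $j-1$ chords being correct, the $j$-th prescribed chord is selected with probability $1/\binom{2n-2j+2}{2}$. Multiplying along a fixed order and summing over the $(k-1)!$ orders gives
\[
\pr(C_k'=\C)=(k-1)!\cdot\frac{1}{2n-1}\prod_{j=2}^{k}\binom{2n-2j+2}{2}^{-1}=\frac{2^{k-1}(k-1)!\,(2n-2k)!}{(2n-1)!},
\]
which depends only on $n$ and $k$, not on $\C$ or $S$. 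Hence $C_k'$ is uniform over this family.

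Next I would fix the target diagram $\D$ on $[2k]$. For each $2k$-subset $S\ni 1$ of $[2n]$, the order isomorphism $[2k]\to S$ pulls $\D$ back to a unique chord diagram $\C_S$ on $S$ with $\tau(\C_S)=\D$, and since $C_k'$ always contains the label $1$, these $\C_S$ are exactly the possible values of $C_k'$ mapped to $\D$ by $\tau$. The event $\{\tau(C_k')=\D\}$ is therefore the disjoint union over the $\binom{2n-1}{2k-1}$ admissible subsets $S$ of the events $\{C_k'=\C_S\}$, so summing the constant probability from the previous step gives
\[
\pr\big(\tau(C_k')=\D\big)=\binom{2n-1}{2k-1}\cdot\frac{2^{k-1}(k-1)!\,(2n-2k)!}{(2n-1)!}=\frac{2^{k-1}(k-1)!}{(2k-1)!}=\frac{1}{(2k-1)!!},
\]
independent of $\D$, which is precisely the asserted uniformity.

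I do not expect a genuine obstacle, as the argument is a direct adaptation of Lemma~\ref{continuous uniform}. The one point requiring care is the asymmetry of the first step: because the first chord is forced to pass through the reference endpoint $1$, the diagram $C_k'$ is uniform only over diagrams whose endpoint set \emph{contains} $1$, rather than over all $2k$-subsets of $[2n]$. One must then verify that the order-preserving relabeling $\tau$ nevertheless redistributes this uniform law evenly across all diagrams on $[2k]$; the remaining factorial and double-factorial simplifications are routine.
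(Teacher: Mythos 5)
Your proposal is correct and follows essentially the same route as the paper's proof: compute $\pr(C_k'=\C')$ for a fixed diagram $\C'$ on a $2k$-subset containing the endpoint $1$ (by forcing the chord through $1$ at step one and summing over the $(k-1)!$ orders of the remaining chords), observe this probability is constant, and multiply by the $\binom{2n-1}{2k-1}$ preimages of $\D$ under $\tau$. The factorial simplifications also match the paper's.
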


\begin{proof}
Let $\C$ be a chord diagram on $[2k]$ with $k$ chords. Let $c_1,\dots,c_k$ be the chords of $\C$ where $c_1$ contains the endpoint 1. For any endpoint set $I=\{1=i_1,i_2,\dots,i_{2k}\} \subset [2n]$, there is a unique chord diagram $\C'$ on  $I$ such that $\tau(\C')=\C$. On the other hand, for any $\C'$ with $\tau(\C')=\C$, 
\[	\pr(C_k'=\C') = \frac{1}{2n-1}\cdot \prod_{j=1}^{k-1}\frac{k-j}{{2n-2j \choose 2}}		\]
since the partner of endpoint 1 in $\C'$ is chosen in the first step with probability $1/(2n-1)$, and 
at step $(j+1)$ we can choose one of $(k-j)$ chords out of ${2n-2j \choose 2}$ chords for $j\in [k-1]$. Since there are ${2n-1 \choose 2k-1}$ diagrams $\C'$ such that $\tau(\C')=\C$ (one for any choice of the $2k-1$ endpoints), we have
\[	 
\pr(\tau(C_k')=\C) = {2n-1 \choose 2k-1}\cdot \frac{1}{2n-1}\cdot \prod_{j=1}^{k-1}\frac{k-j}{{2n-2j \choose 2}}= \frac{1}{(2k-1)!!},
 \]
 which gives the desired result.
\end{proof}

By applying the $\tau$ operator to the chord diagrams obtained during the process, we get a sequence of chord diagrams, $\tau(C_1'),\dots,\tau(C_n')$, where $\tau(C_k')$ is uniformly distributed over all chord diagrams with $k$ chords on $[2k]$. Now, we want to show that this sequence has the same probability distribution with the sequence of the first $n$ diagrams obtained in the previous section. We write $X \stackrel{d}{=}Y$ when the two random variables $X$ and $Y$ have the same distribution. 

\begin{lemma}\label{lemma: same distribution}
$(\tau(C_1'),\dots,\tau(C_n')) \stackrel{d}{=} (U_1,\dots,U_n)$, that is, 	
\[	 (C_1',\dots,C_n') \stackrel{d}{=} (U_1,\dots,U_n)	\]
\end{lemma}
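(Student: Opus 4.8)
The plan is to realize each process as the uniform measure on an explicit finite set of ``histories'' and then to exhibit a single trace-preserving bijection between these sets. First I would record that the discrete process is nothing but a uniformly random \emph{ordered matching}: a sequence $(e_1,\dots,e_n)$ of pairwise disjoint chords of $[2n]$ with $1\in e_1$, added in this order. Indeed, by the computation in the proof of the previous lemma, each such sequence occurs with the \emph{same} probability
\[
P_0=\frac{1}{2n-1}\prod_{j=1}^{n-1}\binom{2n-2j}{2}^{-1}=\frac{1}{(n-1)!\,(2n-1)!!},
\]
independently of which chords are chosen, so $(C_1',\dots,C_n')$ is uniform over the $(n-1)!\,(2n-1)!!$ ordered matchings. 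Likewise, the continuous process is a uniformly random \emph{arc-choice sequence} $(\alpha_2,\dots,\alpha_n)$, where $\alpha_i$ is one of the $\binom{2i-1}{2}$ ways of selecting the two arcs of $U_{i-1}$ into which the endpoints of the $i$-th chord fall; each such sequence has probability $\prod_{i=2}^n\binom{2i-1}{2}^{-1}$, and since $\prod_{i=2}^n\binom{2i-1}{2}=(n-1)!\,(2n-1)!!$ this again equals $P_0$, so the arc-choice sequences are equiprobable and equinumerous with the ordered matchings.

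Each history produces a sequence of labelled diagrams $\mathcal D_1,\dots,\mathcal D_n$ with $\mathcal D_k$ on $[2k]$: the discrete \emph{trace} is $\mathcal D_k=\tau(\{e_1,\dots,e_k\})$, while the continuous trace is obtained by applying the arc-choices successively to $\mathcal D_1=U_1$. I would then define $\Phi$ from ordered matchings to arc-choice sequences by letting $\alpha_i$ be the pair of arcs of $\mathcal D_{i-1}=\tau(\{e_1,\dots,e_{i-1}\})$ into which the two endpoints of $e_i$ fall (the $2(i-1)$ used endpoints split the circle into $2(i-1)$ arcs, and the two endpoints of $e_i$ determine a well-defined, possibly equal, pair). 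By construction $\alpha_i$ is exactly the arc-choice carrying $\mathcal D_{i-1}$ to $\mathcal D_i=\tau(\{e_1,\dots,e_i\})$, so $\Phi$ is \emph{trace-preserving}. Granting that $\Phi$ is a bijection, it carries the uniform measure on ordered matchings to the uniform measure on arc-choice sequences (both assign mass $P_0$); preserving traces, it then forces the law of $(\tau(C_1'),\dots,\tau(C_n'))$ to coincide with the law of $(U_1,\dots,U_n)$, which is the assertion.

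The remaining—and main—point is that $\Phi$ is a bijection; as the two sets are equinumerous it suffices to invert it. Given an arc-choice sequence I would first run it forward from $\mathcal D_1$ to recover the full trace $\mathcal D_1,\dots,\mathcal D_n$, noting that $\mathcal D_n$ is a genuine diagram on $[2n]$ and so already carries its true labels. I would then peel chords off in reverse: for $i=n,n-1,\dots,1$ the datum $(\mathcal D_{i-1},\mathcal D_i,\alpha_i)$ singles out a unique chord of $\mathcal D_i$—the one occupying the arcs prescribed by $\alpha_i$, whose deletion and relabelling returns $\mathcal D_{i-1}$—and I declare that chord, read in its $[2n]$ labels, to be $e_i$. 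This reconstructs $(e_1,\dots,e_n)$ uniquely with $1\in e_1$ (the label $1$ is never moved by $\tau$), and it inverts $\Phi$. The delicate step—and the crux—is that the ``new'' chord of $\mathcal D_i$ is pinned down by $\alpha_i$ and not merely by the pair $(\mathcal D_{i-1},\mathcal D_i)$: several chords of $\mathcal D_i$ may each delete-and-relabel back to $\mathcal D_{i-1}$ (for instance when $\mathcal D_i$ is a union of simple chords), and it is precisely the arc-choice data that resolves this ambiguity, so verifying that $\alpha_i$ always selects exactly one such chord is where the real work lies.
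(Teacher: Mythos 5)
Your global strategy is sound and is in fact close in substance to the paper's: the paper also reduces the claim to showing that each admissible ``history'' (there encoded as a labeling $r\in L(\mathbf{D})$ of the chords of $D_n$ by order of appearance) has the same probability $2^{n-1}/(2n-1)!$ under both processes, which is exactly your $P_0=1/\big((n-1)!\,(2n-1)!!\big)$. Your counts and the equidistribution of histories within each model are correct. The difference is that the paper never needs an explicit bijection between arc-choice sequences and ordered matchings: it computes $\pr(r_u=r)$ directly by observing that, given the first $k$ chords placed as dictated by $r$, exactly one of the $\binom{2k+1}{2}$ arc choices produces the $(k+1)$st chord of $r$ in the correct position.

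The gap is in the one step you flag yourself: the bijectivity of $\Phi$. The inverse you sketch does not work as stated, because the selection rule ``the chord of $\mathcal{D}_i$ occupying the arcs prescribed by $\alpha_i$ whose deletion and relabelling returns $\mathcal{D}_{i-1}$'' can match more than one chord. Take $\mathcal{D}_1=\{\ch{1,2}\}$, $\mathcal{D}_2=\{\ch{1,2},\ch{3,4}\}$ and $\alpha_2$ equal to ``both endpoints in the arc from $2$ to $1$''. Deleting $\ch{3,4}$ returns $\mathcal{D}_1$ with the deleted endpoints in that arc; but deleting $\ch{1,2}$ also returns $\mathcal{D}_1$ (after relabelling $3\mapsto 1$, $4\mapsto 2$), and its deleted endpoints again lie in the arc from (new) $2$ to (new) $1$. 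So the arc-choice datum does not resolve the ambiguity in the way you assert, and the backward peeling is ill-defined. The repair is to argue injectivity forward instead: applying $\alpha_i$ to $\mathcal{D}_{i-1}$ determines not only $\mathcal{D}_i$ but also which two labels of $[2i]$ are the new endpoints, hence which chord of $\mathcal{D}_i$ is new; chasing these designated chords through the successive relabellings up to $\mathcal{D}_n$, where $\tau$ is the identity on $[2n]$, recovers $e_i$ for every $i$, so $\Phi$ is injective and your cardinality count upgrades this to a bijection. You should also justify the trace-preservation itself, namely that inserting the two endpoints of $e_i$ anywhere inside the prescribed gaps of $\{e_1,\dots,e_{i-1}\}$ always yields the same relabelled diagram $\tau(\{e_1,\dots,e_i\})$; this is true but it is precisely the point where the two models get glued together, and it is the same fact the paper uses implicitly when computing $\pr(r_u=r)$.
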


Before proving the lemma, we introduce another operation in chord diagrams.

\begin{definition}
\textit{($\varphi$ operator)}
Let $\C$ be a chord diagram with $n$ chords on the endpoint set $[2n]$. Let 
\[ 	r: \{\text{chords of } \C \}	 \rightarrow [n]	\]
be a bijection such that $r(c_1)=1$, where $c_1$ denotes the chord containing the endpoint 1. Hence, $r$ defines a labeling of the chords such that the chord $r^{-1}(k)$ is labeled with $k$. Let $\C_j$ be the subdiagram consisting of the chords $\{r^{-1}(1),\dots,r^{-1}(j)\}$, the set of chords of $\C$ labeled with $\{1,\dots,j\}$ under $r$. Then we define
\be \label{twosequences}	\varphi_{n,j}^{r}(\C)= \tau(\C_j).					\ee
We supress the subscript $n$ and the coloring $r$ when there is no danger of confusion and write $\varphi_j(\C)$ instead of $\varphi_{n,j}^{r}(\C)$. In words, $\varphi_j$ deletes the last $n-j$ chords and relabels the endpoints of the remaining chords with the labels in $[2j]$, respecting their previous relative ordering.
\end{definition}

\begin{example*}
Let $n=7$. Let $\C$ be the chord diagram on the endpoint set $[14]$ with the chords $\ch{1,5}$, $\ch{2,14}$, $\ch{3,8}$, $\ch{4,9}$, $\ch{7,12}$, $\ch{6,13}$, and $\ch{10,11}$. Let $r$ be the labeling of the chords with the labels 1 through 7, in this given order.
Then, $\varphi_{7,4}^r(\C)$ is a chord diagram of size 4, and its chords are $\ch{1,5}$, $\ch{2,8}$, $\ch{3,6}$, and~$\ch{4,7}$.
\end{example*}

\begin{proof}[Proof of Lemma \ref{lemma: same distribution}]
Let $D_1,\dots,D_n$ be given chord diagrams such that $D_i$ is a chord diagram with $i$ chords on $[2i]$. We want to show that 
\be \label{UC'}	\pr(\tau(C_1')=D_1,\dots,\tau(C_n')=D_n)= \pr(U_1=D_1,\dots,U_n=D_n)			\ee
For brevity, we write $\mathbf{D}$ for $(D_1,\dots,D_n)$, $\mathbf{\tau(C')}$ for $(\tau(C_1'),\dots,\tau(C_n'))$, and $\mathbf{U}$ for $(U_1,\dots,U_n)$. In both processes, $\{U_k\}_{k=1}^n$ and $\{C_k'\}_{k=1}^n$, we label the chord created at the $j$th step with $j$. We denote by $r_c$ the chord labeling of $C_n'$ obtained during the process $\{C_k'\}_{k=1}^n$, and by $r_u$  the chord labeling of $U_n$ obtained during the process $\{U_k\}_{k=1}^n$. By definition, we have $\varphi_{n,j}^{r_c}(C_n')=\tau(C_j')$ and $\varphi_{n,j}^{r_u}(U_n)=U_j$ for all $j\in [n]$.
 
Let $L=L(\mathbf{D})$ be the set of labelings of the chords of $D_n$ by $[n]$, such that the chord containing endpoint 1 is labeled with $1$ and $\varphi_j(D_n)=D_j$ for all $j\in [n-1]$. Note that, if $L=\emptyset$, then there cannot be any evolution in either model that produces $\mathbf{D}$. Consequently, both sides of Equation \eqref{UC'} are $0$ in that case. 

\medskip
\textit{Claim}: The sequence $\mathbf{\tau(C')}$  is the same as $\mathbf{D}$ if and only if $C_n'=D_n$ and $r_c \in L$. Similarly, the sequence $\mathbf{U}$ is the same as $\mathbf{D}$ if and only if $U_n=D_n$ and $r_u \in L$.

\textit{Proof of the claim}:
We prove only the first part of the claim since the second part is very similar. Suppose first that  $\mathbf{\tau(C')}=\mathbf{D}$. In this case, the last components of the two vectors must be the same and hence $C_n'=D_n$. Consequently,
\[	
\varphi_{n,j}^{r_c}(D_n)=\varphi_{n,j}^{r_c}(C_n')=\tau(C_j')= D_j \text{ for all } j\in [n],
\]
from which it  follows that $r_c \in L$. Conversely, if $D_n=C_n'$ and $r_c \in L$, then obviously
\[	D_j = 	\varphi_{n,j}^{r_c}(D_n)=\varphi_{n,j}^{r_c}(C_n')=\tau(C_j'),	\]
which finishes the proof. \qed

Note that a necessary condition for two diagrams having the same chord labeling is that the diagrams are the same. Thus, the above claim could indeed be stated as 
\begin{enumerate}[(i)] \itemsep0pt \parsep0pt \parskip0pt
\vspace{-\partopsep}
\item $\mathbf{\tau(C')}= \mathbf{D} \iff 	r_c \in L$, and
\item 	$ \mathbf{U}= \mathbf{D}  \iff 	r_u \in L$.
\end{enumerate}
Now, in order to find the probabilities on either side of Equation \eqref{UC'}, we need to find the probability that a given $r\in L$ is produced during the corresponding process, i.e., the probabilities $\pr(r_c=r)$ and $\pr(r_u=r)$.
Let $r$ be a labeling in $L$. The two labelings $r_c$ and $r$ are the same if and only if  the chord $r^{-1}(i)$ is created at the  $i$th step of the process $\{C_k'\}_{k=1}^n$ for all $i \in [n]$. Therefore, we need to find the probability that the chord created at step $i$ is the same as $r^{-1}(i)$  conditioned on the previous $i-1$ chords. Now, the chord labeled $1$ is created at the first step with probability $1/(2n-1)$, and conditioned on the first $i-1$ chords, the chord labeled $i$ is created at the $i$th step with probability $2/(2n-2i+2)(2n-2i+1)$ for $i>1$. Thus,
\[	\pr(r_c=r)=  \frac{1}{(2n-1)}\prod_{i=2}^{n}\frac{2}{(2n-2i+2)(2n-2i+1)}= \frac{ 2^{n-1}}{(2n-1)!},	\]
and consequently
\[	\pr(\mathbf{\tau(C')}=\mathbf{D})= |L| \cdot \frac{1}{(2n-1)}\prod_{i=2}^{n}\frac{2}{(2n-2i+2)(2n-2i+1)}= \frac{|L|\cdot 2^{n-1}}{(2n-1)!}.		\]

We argue similarly for the right side of Equation~\eqref{UC'}. More specifically, for the equality $U_j=D_j$ for all $j\in [n]$ to hold, we must have $r_u=r$ for some $r\in L$. 
In $D_n$, or in $U_n$,  we do not have to keep a record of the endpoint labels other than the endpoint labeled $1$. 
As long as we know the endpoint labeled 1 and the positions of the endpoints of the chords relative to each other, we can uniquely determine the endpoint labeling for $D_n$. 
Now, given a labeling $r$  in $L$, we need to find the probability that $r$ is obtained during the process $\{U_k\}_{k=1}^n$. For the first chord there is no restriction, so it will be created with probability 1. 
The chord labeled 2 in $r$ is created with probability $1/3$ at the second step of the process $\{U_k\}$ since there are three possibilities after the first chord is drawn. 
In general, if $k$ chords are already created in $\{U_k\}_{k=1}^n$ matching the labeling $r$, the chord labeled as $k+1$ will be created with probability $\frac{1}{{2k+1 \choose 2}}$ at the $(k+1)$st step. Thus, $r$ is obtained with probability
\[		\prod_{k=1}^{n-1} \frac{1}{{2k+1 \choose 2}} =\frac{2^{n-1}}{(2n-1)!},			\]
and consequently,
\[ \pr(\mathbf{U} =\mathbf{D}) = \frac{|L|\cdot 2^{n-1}}{(2n-1)!}.	\]
Thus, $\pr(\mathbf{\tau(C')}=\mathbf{D})=\pr(\mathbf{U} =\mathbf{D})$ as desired.
\end{proof}

We extend the definitions of simple chords and monolithicity to chord diagrams obtained during the second process $\{C_k'\}_{k=1}^n$. We say that $C_k'$ is monolithic if $\tau(C_k')$ is monolithic. Combining Corollary \ref{cor: continuous model-monolithicity} with Lemma \ref{lemma: same distribution}, we obtain the following generalization of the Flajolet-Noy result (see Theorem \ref{FN}).

\begin{corollary}\label{continuous model-monolithicity}
Let $\omega(n)$ be a function of $n$ that tends to infinity as $n$ tends to infinity. \Whp, all the chord diagrams $C_m'$ for $\omega(n) \le m\le n$ are monolithic as $n \to \infty$. \qed
\end{corollary}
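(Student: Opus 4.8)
The plan is to combine the distributional identity of Lemma~\ref{lemma: same distribution} with the monolithicity result for the continuous model in Corollary~\ref{cor: continuous model-monolithicity}. The key observation is that the target event is, by definition, a measurable function of the finite vector $(\tau(C_1'),\dots,\tau(C_n'))$, so the joint equality in distribution transfers it verbatim to the continuous process.

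First I would unwind the definition: recall that $C_m'$ is declared monolithic precisely when $\tau(C_m')$ is monolithic. Hence the event
\[
\mathcal{M}_n := \{\, C_m' \text{ is monolithic for all } \omega(n)\le m\le n \,\}
\]
coincides with $\{\, \tau(C_m') \text{ is monolithic for all } \omega(n)\le m\le n \,\}$, which depends only on the components $\tau(C_{\omega}'),\dots,\tau(C_n')$ of the length-$n$ vector $(\tau(C_1'),\dots,\tau(C_n'))$.

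Next I would apply Lemma~\ref{lemma: same distribution}, which gives $(\tau(C_1'),\dots,\tau(C_n')) \stackrel{d}{=} (U_1,\dots,U_n)$. Since $\mathcal{M}_n$ is a function of this vector alone, its probability is unchanged when the $\tau(C_m')$'s are replaced by the $U_m$'s; thus
\[
\pr(\mathcal{M}_n)=\pr\big(U_m \text{ is monolithic for all } \omega(n)\le m\le n\big).
\]
The right-hand event is implied by the event that $U_m$ is monolithic for \emph{all} $m\ge \omega(n)$ (requiring monolithicity on a larger index set only shrinks the probability), so $\pr(\mathcal{M}_n)\ge \pr\big(U_m \text{ monolithic for all } m\ge \omega(n)\big)$. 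Writing $q(\omega):=\pr\big(U_m\text{ monolithic for all } m\ge \omega\big)$, Corollary~\ref{cor: continuous model-monolithicity} states exactly that $q(\omega)\to 1$ as $\omega\to\infty$; since $\omega(n)\to\infty$, we conclude $\pr(\mathcal{M}_n)\ge q(\omega(n))\to 1$, as required.

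The only real point to watch is the asymmetry between the finite and infinite families: Corollary~\ref{cor: continuous model-monolithicity} is a statement about the infinite tail $\{U_m : m\ge \omega\}$ of the continuous process, whereas Lemma~\ref{lemma: same distribution} only provides equality in distribution for finite initial segments. The plan sidesteps this cleanly by using the infinite-tail event merely as a lower bound for the finite-window event, so no strengthening of Lemma~\ref{lemma: same distribution} to infinite sequences is needed. I expect this matching of quantifiers (and confirming that $\omega(n)\to\infty$ feeds the correct limit into $q$) to be the subtlest step, but it is otherwise routine.
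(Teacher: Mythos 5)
Your proposal is correct and follows exactly the route the paper intends: the paper proves this corollary simply by citing the combination of Lemma~\ref{lemma: same distribution} and Corollary~\ref{cor: continuous model-monolithicity}, and your write-up supplies the details of that combination. Your careful handling of the finite-window versus infinite-tail quantifier mismatch is a legitimate and worthwhile elaboration of a point the paper leaves implicit, but it is not a different argument.
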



\section{Concluding remarks}\label{sec:conclusion}

Let $\C$ be a chord diagram and $G_{\C}$ be the corresponding intersection graph. We define the cliques and independent sets of $\C$ as the preimages of the cliques and independent sets in $G_{\C}$, respectively. Hence, a {\em clique} in $G_{\C}$ corresponds to a set of pairwise intersecting chords in $\C$. Likewise, an {\em independent set} in $G_{\C}$ corresponds to a set of pairwise non-intersecting chords. The {\em clique number} of $\C$ is the largest $k$ for which there is a $k$-clique in $\C$. Similarly, the {\em independence number} of $\C$ is the largest $k$ such that there is an independent set of size $k$ in $\C$.

Recall from Section~\ref{sec:intro} that Chen et al.~\cite{Chen07} and Baik and Jenkins~\cite{Ba-Jen13} defined an $r$-crossing and an $r$-nesting as follows. A set of  $r$ chords $\ch{x_1,y_1},\dots,\ch{x_r,y_r}$ is an $r$-crossing if $x_1<\cdots<x_r<y_1<\cdots<y_r$ and it is an $r$-nesting if $x_1<\cdots<x_r<y_r<\cdots<y_1$. With this definition, an $r$-crossing is the same as an $r$-clique. On the other hand, an $r$-nesting is an independent set of size $r$ but the converse is not true, that is, not every independent set in $\C$ is a nesting. 

Now let $\omega(C_n)$ and $\alpha(C_n)$ denote the clique number and the independence number of $C_n$, respectively. Also, let $\alpha'(C_n)$ denote the nesting number of $C_n$, the largest number $r$ for which there is an $r$-nesting. Using a result of  Baik and Rains~\cite[Theorem 3.1]{Ba-Ra01}, Chen et al.~\cite[Remark 5.6]{Chen07} remarked that
\be \label{clique dist.}
\lim_{n \to \infty} \pr \lp \frac{\omega(C_n)-\sqrt{2n}}{(2n)^{1/6} } \le 2x\rp = F(x),
\ee
where $F(x)$ is the GOE Tracy-Widom distribution function. Moreover, they proved that $\alpha'(C_n)$ and $\omega(C_n)$ are equally distributed, so~\eqref{clique dist.} holds if we replace $\omega(C_n)$ by $\alpha'(C_n)$ in the equation. On the other hand, we have $\alpha(C_n)\ge \alpha'(C_n)$ since each nesting is an independent set. 

Now define $X_r$ as the number of independent sets of size $r$ in $C_n$. We have
\[
\mean[X_r] = \frac{{2n \choose 2r}(2n-2r-1)!!}{(2n-1)!!} \cdot \frac{1}{r+1}{2r \choose r}
\]
since for each set of $2r$ points there are ${2r \choose r}/(r+1)$ ways to pair them without any intersection (this number is the $r$th Catalan number). For $r=e\sqrt{2n}$, simplifying this expression and using Stirling's formula, we get
\[
\mean[X_r] \sim \frac{1}{2\pi r^2} \lp \frac{2e^2n}{r^2}\rp^r e^{-r^2/2n} 	\to 0.
\]
This equation, together with~\eqref{clique dist.} and the fact that $\alpha(C_n)\ge \alpha'(C_n)$, show that, \whp,
\[
\sqrt{2n}-tn^{1/6} \le \alpha(C_n) \le e\sqrt {2n}
\]
for any $t\to \infty$. We conclude this work with two open questions.

\begin{question}
Is there a constant $\beta$ such that $\alpha(C_n)/ \sqrt{n} \to \beta$ in probability?
\end{question}

\begin{question}
 What is the asymptotic distribution of $\alpha(C_n)$?
\end{question}


\end{document}